\newcommand{\R}{\mathbb{R}}
\newcommand{\Z}{\mathbb{Z}}
\newcommand{\Opis}{{\cal O}}
\newcommand{\Apis}{{\cal A}}
\newcommand{\codim}{\mathop{\rm codim}\nolimits}
\newcommand{\rank}{\mathop{\rm rank}\nolimits}
\newcommand{\sgn}{\mathop{\rm sgn}\nolimits}
\newcommand{\inv}{^{-1}}
\newtheorem{theorem}{Theorem}[section]
\newtheorem{lemma}[theorem]{Lemma}
\newtheorem{cor}[theorem]{Corollary}
\newtheorem{prop}[theorem] {Proposition}
\theoremstyle{definition}
\newtheorem{definition}[theorem]{Definition}
\newtheorem{rem}[theorem]{Remark}
\newtheorem{ex}[theorem]{Example}
\title{\thanks{%
Iwona Krzy\.{z}anowska $({\textrm{\Letter}})$ and Aleksandra~Nowel\\
University of Gda\'{n}sk,
              Institute of Mathematics\\ 
              Faculty of Mathematics, Physics and Informatics\\ 
              University of Gda\'{n}sk\\ 
              Wita Stwosza 57\\ 
              80-308 Gda\'{n}sk\\
              Poland\\
              Email: Iwona.Krzyzanowska@mat.ug.edu.pl $({\textrm{\Letter}})$\\
              Email: Aleksandra.Nowel@mat.ug.edu.pl\\ \\
{\em Keywords:} intersection number, corank 1 matrices, homotopy invariant, cross--cap singularity, polynomial mapping\\              
2010 \emph{Mathematics Subject Classification} 14P25 57R45 12Y05}%14P25, 57R45, 12Y05
Intersection number of a map with the set of matrices of positive corank
}
\author{Iwona~Krzy\.{z}anowska \and Aleksandra~Nowel}
\date{2020}
\begin{document}

\def\nothanksmarks{\def\thanks##1{\protect\footnotetext[0]{\kern-\bibindent##1}}}

\nothanksmarks

\maketitle

\pagestyle{fancy}

\lhead{\fancyplain{}{\textsc{\small Krzy\.{z}anowska, Nowel}}}
\rhead{\fancyplain{}{\emph{\small Intersection number}}}

\begin{abstract}
The definition of the intersection number of a map with a closed manifold can be extended  to the case of a closed stratified set such that the difference between dimensions of its two biggest strata is greater than $1$. The set $\Sigma$ of matrices of positive  corank is an example of such a set. It turns out that the intersection number of a map from an $(n-k+1)$--dimensional manifold with boundary into the set of $n\times k$ real matrices with $\Sigma$ coincides with a homotopy invariant associated with a map going to the Stiefel manifold $\widetilde{V}_k(\R^n)$. In a polynomial case, we present an effective method to compute this intersection number. We also show how to use it to count the number mod $2$ or the algebraic sum of cross--cap singularities of a map from an $m$--dimensional manifold with boundary to $\R^{2m-1}$. 
\end{abstract}

\section{Introduction}

One of the most important tools in topology is the topological degree of a map $f\colon M \to N$ between compact $m$--manifolds. Due to \cite{lecsza1, lecsza2, szafraniec1} we have effective algorithms to compute the topological degree in the polynomial case. 
In fact, the topological degree can be treated as a special case of a more general geometrical concept, called the intersection number (see \cite{hirsch}). Hirsch (\cite{hirsch}) presented the intersection number of a map $M \to W$ with a closed $n$--dimensional submanifold $N\subset W$, where $M$ and $W$ are manifolds, $M$ is compact, $\dim M=m$, $\dim W=n+m$. It is a homotopy invariant.

However, there are situations, where this invariant could be useful, but it cannot be used, because a mapping intersects a set which is not a closed manifold. A good example of such a situation is a map to the set of matrices intersecting the set of matrices of positive corank. In this paper we extend the definition of intersection number to the case where $N$ is a closed stratified subset of $W$ such that the difference between dimensions of the two biggest strata is greater than $1$ (see Section \ref{indeks}). We denote this intersection number of $f\colon M \to W$ with $N$ by $\operatorname{I}(f,N)$ or $\operatorname{I}_2(f,N)$, depending on the orientability of manifolds. Again we obtain a homotopy invariant. 

In case of a mapping $a$ from an $(n-k+1)$--dimensional compact manifold $M$ with boundary to $M_k(\R^n)$ (the set of $n\times k$ real matrices), the intersection number with the set $\Sigma$ of matrices of corank $\geqslant 1$ is well defined. In Section \ref{inter_matrix} we present a nice characterization of this intersection number. We show that for $a|\partial M$ it coincides with invariants presented in  \cite{krzyznowel} (when $M$ is a closed ball, $n-k$ odd) and in \cite{krzyzszafran} ($n-k$ even), where the authors defined a homotopy invariant $\Lambda$ associated with a map from an $(n-k)$--dimensional boundaryless manifold into the Stiefel manifold $\widetilde{V}_k(\R^n)$. When the domain is a sphere, $\Lambda$ induces an isomorphism between $\Z_2$ or $\Z$ (depending on the parity of $n-k$) and $(n-k)$--th homotopy group of $\widetilde{V}_k(\R^n)$.  

In Section \ref{counting} we present effective methods to compute the intersection number modulo $2$ of a polynomial map $a\colon M\to M_k(\R^n)$. We express $\operatorname{I}_2(a, \Sigma)$ in terms of signs of determinants of matrices of some quadratic forms (see Theorem \ref{formula}). So we obtain an easy way to verify whether two such maps are not homotopic.

The main applications of the intersection number defined in this article are showed in Section \ref{applications}. 
In \cite{whitney2, whitney1} Whitney studied general singularities (cross--caps) of mappings from an $m$--dimensional manifold to $\R^{2m-1}$. When $m$ is odd, he associated a sign with a cross--cap singularity.  In \cite{whitney1}, Whitney proved that if $M$ is closed and $f$ has only cross--caps as singularities then the number of cross--caps is even, moreover if $m$ is odd, then the algebraic sum (i.e. the sum of signs) of them equals zero. 
If  $M$ has a boundary then following \cite[Theorem 4]{whitney1}, for a homotopy $f_t\colon
M\longrightarrow \R^{2m-1}$ regular in some open neighbourhood of $\partial M$, if
the only singular points of $f_0$ and $f_1$ are cross--caps then the numbers of
cross--caps of $f_0$ and $f_1$ are congruent mod $2$, and if $m$ is odd, then the algebraic sums of cross--caps of $f_0$ and $f_1$ are the same. 
We present methods how to  effectively check if $f$ has only cross--caps as singular points. Then in the polynomial case using \cite{becker, pedersenetal} one can compute the number of them. We also express the number of cross--caps modulo $2$, and for $m$ odd the algebraic sum of them, in terms of the intersection number presented in this paper. So in the polynomial case, using \cite{krzyzszafran}, and resp. the results presented in Section \ref{counting} one can compute them using for example \textsc{Singular}. In the map--germ case criteria for singularities, including cross--caps, are given in \cite{Saji}.

\section{Preliminaries}
\label{prel}

In this article manifold means a smooth manifold. By $id$ we mean an identity map, \# denotes the number of elements. For the mapping $f$ depending on variables $(x,y)\in \R^m\times \R^l$ we denote by $\frac{\partial f}{\partial x}$ the matrix of partial derivatives of $f$ with respect to the variables $x$.

We will use some properties of the topological degree $\deg g$ (resp. topological degree modulo $2$ --- $\deg _2 g$) of a map $g$ between two compact manifolds. 

Let us recall that if $M$ is an oriented $m$--manifold, $f\colon M\longrightarrow
\R^m$, and $p\in M\setminus \partial M$ is isolated in $f\inv (0)$, then there exists a compact
$m$--manifold $K\subset M\setminus \partial M$ with boundary such that $f\inv (0)\cap K=\{ p\}$ and
$f\inv (0)\cap \partial K=\emptyset$. By $\deg (f,p)$ we will denote the local
topological degree of $f$ at $p$, i.e. the topological degree of the mapping
$\partial K\ni x \mapsto f(x)/|f(x)|\in S^{m-1}$ (for properties of $\deg (f,p)$ see
for example \cite{nirenberg}).

If $g\colon M\longrightarrow \R^m$ is close enough to $f$, then $g\inv (0)\cap
\partial K$ is also empty, and the topological degree of the mapping $\partial K\ni
x \mapsto g(x)/|g(x)|\in S^{m-1}$ is equal to $\deg (f,p)$.

If $M$ is not oriented, then  the local topological degree modulo $2$ --- $\deg_2(f,p)$ of $f$ at $p$ is defined similarly using $\deg_2f$.

If $f\colon (M,\partial M)\to (\R^m,\R^m\setminus \{0\})$, then by $\deg (f|\partial M)$ (resp. $\deg _2(f|\partial M)$) we denote the topological degree (resp. the topological degree modulo $2$) of the map $\partial M\ni x \mapsto f(x)/|f(x)|\in S^{m-1}$. 
 
\bigskip

In this article we will investigate mappings into $M_k(\R^n)$.

\medskip

Take $n-k>0$. 
Let $M_k(\R^n)$ be the space of $n\times k$ real matrices. We identify it with $\R^{kn}$ writing down elements by columns. By $\Sigma^i\subset M_k(\R^n)$ we denote the set of matrices of corank $i$. Put $\Sigma=\bigcup_{i=1}^k\Sigma^i$. 

Note that $\Sigma$ is closed and each $\Sigma^i$ is a submanifold of $M_k(\R^n)$ of codimension $(n-k+i)i$ (see \cite[Chapter II, Proposition 5.3]{golub}). Moreover $\Sigma^i$'s are orientable for $n-k$ even, and non--orientable for $n-k$ odd (see \cite{ando}). Then the stratification $\Sigma=\bigcup_{i=1}^k\Sigma^i$ has properties described at the beginning of Section \ref{indeks}, where $N_{nk-(n-k+i)i}=\Sigma^i$, and other $N_j=\emptyset$.

Let now $M$ be an $(n-k+1)$--dimensional compact manifold with boundary. 
Take a smooth mapping $a:M\longrightarrow M_k(\R^n)$. By $a_i$ we denote its $i$-th column, and by $a_{i}^j(x)$ --- the elements of the matrix $a(x)$ (standing in the $j$--th row and $i$--th column). From now on we will assume that 
\begin{equation}
a(\partial M)\cap \Sigma=\emptyset . \label{zalozenia}
\end{equation}

With the mapping $a$ we may associate (as in \cite{krzyznowel, krzyzszafran}) the mapping $\widetilde{a}:S^{k-1}\times M\longrightarrow\R^n$ given by 
$$\widetilde{a}(\beta,x)=\beta_1a_1(x)+\ldots+\beta_ka_k(x),$$ 
where $\beta=(\beta_1,\ldots ,\beta_k)\in S^{k-1}$. There $S^{k-1}$ denotes the $(k-1)$--dimensional sphere with radius $1$ centered at the origin. Note that $\widetilde{a}(-\beta, x)=-\widetilde{a}(\beta,x)$, and $a(x)\in \Sigma$ if and only if there is $\beta\in S^{k-1}$ such that $\widetilde{a}(\beta,x)=0$. Moreover with each $x\in a\inv (\Sigma^1)$ correspond exactly two elements $(\beta,x), (-\beta,x)\in \widetilde{a}\inv (0)$. 

According to \cite[Theorem 2.3]{krzyznowel}, $a\pitchfork \Sigma^1$ if and only if $0$ is a regular value of the mapping $\widetilde{a}$.

\section{Intersection number of a map with a closed stratified set}
\label{indeks}

Let $W$ be an $(m+n)$--dimensional manifold and $N=\bigcup_{i=0}^nN_i$ be a closed stratified subset of $W$, where  $N_n$ is an $n$--dimensional submanifold of $W$, $N_{n-1}=\emptyset$ and $N_i$ is either an $i$--dimensional submanifold of $W$ or an empty set, for $i=0,\ldots ,n-2.$ In this Section we will define the intersection number of a mapping from an $m$--dimensional compact manifold to $W$ with the set $N$, which coincide with the definition from \cite[Chapter 5.]{hirsch} in case when $N$ is a submanifold of $W$.

For any manifold $S$ with or without boundary, and a smooth map $f\colon S\longrightarrow W$ we will say that $f$ is transversal to $N$ ($f\pitchfork N$) if $f\pitchfork N_i$ for $i=0,\ldots n$. 
Let us see that the set $\{f\in {\cal C}^{\infty}(S,W)\ |\ f\pitchfork N\}=\bigcap\{f\pitchfork N_i\}$. According to \cite[II.4.9, II.4.12]{golub} each set $\{f\pitchfork N_i\}$ contains a residual subset, and since the set of smooth maps from $S$ to $W$ is a Baire space, $\{f\in {\cal C}^{\infty}(S,W)\ |\  f\pitchfork N\}$ is a dense subset of $C^{\infty}(S,W)$.

Take an $m$--dimensional compact manifold  $M$ without boundary and a smooth map $f:M\longrightarrow W$. Then $f\pitchfork N$ if and only if $f\pitchfork N_n$ and $f(M)\cap N_i=\emptyset$ for $i=0,\ldots , n-1$. In that case $f\inv (N)=f\inv(N_n)$, moreover $\codim f\inv(N_n)=\codim N_n=m$, so $f\inv (N_n)$ is a $0$--dimensional submanifold of a compact manifold and so $f\inv (N)$ is finite.

\bigskip

Let $M$, $W$, and $N_n$ be oriented. Take a smooth $f\colon M \longrightarrow W$ such that $f\pitchfork N$. 

\begin{definition}
For $x\in f\inv (N)$ we define $\operatorname{I}(f,N)_x$ as $+1$ or $-1$ depending on whether the mapping 
\[
\eta \colon T_xM \stackrel{df(x)}{\longrightarrow} T_{f(x)}W \longrightarrow T_{f(x)}W/ T_{f(x)}N_n                                                                                                                                                                                                                                                                              \]
preserves or reverses orientation. 

There the orientation of $T_{f(x)}W/ T_{f(x)}N_n$ is the orientation of the normal bundle of $N_n$ at $f(x)$ chosen in such a way, that the orientation of the direct sum of the tangent space and the normal bundle of $N_n$ at $f(x)$ coincides with the orientation of $T_{f(x)}W$. 
\end{definition}

\begin{definition}
The intersection number of $f$ (with $N$) is given by
\[
\operatorname{I}(f,N)=\sum \limits_{x\in f\inv (N)} \operatorname{I}(f,N)_x. 
\]
\end{definition}

\begin{theorem}
If $M$, $W$, $N_n$ are oriented, $f,g:M\longrightarrow W$ are smooth maps such that $f\pitchfork N$, $g\pitchfork N$ and $f$ and $g$ are homotopic, then $\operatorname{I}(f,N)=\operatorname{I}(g,N)$. 
\end{theorem}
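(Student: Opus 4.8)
The plan is to run the standard cobordism argument for homotopy invariance of intersection numbers, as in \cite[Chapter 5]{hirsch}, but with the extra care that the connecting homotopy must be pushed off \emph{all} of the lower strata $N_0,\ldots,N_{n-2}$. Let $F\colon M\times[0,1]\ar W$ be a smooth homotopy with $F_0=f$ and $F_1=g$. Since $f$ and $g$ are already transversal to each $N_i$, they remain transversal on a neighbourhood of $M\times\{0,1\}$; the relative transversality theorem \cite[II.4.9, II.4.12]{golub} then lets me perturb $F$, keeping it fixed near $M\times\{0,1\}$, to a homotopy $H$ with $H\pitchfork N_i$ for every $i$, with $H_0=f$ and $H_1=g$.

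The decisive observation --- and the only place where the hypothesis $N_{n-1}=\emptyset$ (equivalently, the dimension gap exceeding $1$) enters --- is a dimension count. The domain $M\times[0,1]$ has dimension $m+1$, while for $i\leqslant n-2$ the stratum $N_i$ has codimension $(m+n)-i\geqslant m+2$ in $W$. Transversality of $H$ to such an $N_i$ therefore forces $H\inv(N_i)=\emptyset$. Consequently $H\inv(N)=H\inv(N_n)$, and since $\codim N_n=m$, this is a compact $1$--dimensional manifold with boundary contained in $M\times\{0,1\}$; explicitly $\partial H\inv(N_n)=\bigl(f\inv(N_n)\times\{0\}\bigr)\cup\bigl(g\inv(N_n)\times\{1\}\bigr)$.

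Next I would orient $C:=H\inv(N_n)$ by the preimage (normal--pullback) orientation induced from the orientations of $M\times[0,1]$, of $W$, and of $N_n$, exactly as in the definition of $\operatorname{I}(f,N)_x$. A compact oriented $1$--manifold is a disjoint union of circles and arcs, so summing the induced boundary orientations over $\partial C$ gives $0$. It then remains to identify each boundary sign with the local intersection index: at a point of $f\inv(N_n)\times\{0\}$ the outward conormal to $M\times[0,1]$ points in the $-\partial_t$ direction, whereas at a point of $g\inv(N_n)\times\{1\}$ it points in the $+\partial_t$ direction, so the two faces receive opposite relative signs. Matching the preimage orientation of $C$ against the convention fixing the orientation of $T_{f(x)}W/T_{f(x)}N_n$ shows that the boundary contribution equals $\operatorname{I}(g,N)_x$ at the top face and $-\operatorname{I}(f,N)_x$ at the bottom face, whence $0=\operatorname{I}(g,N)-\operatorname{I}(f,N)$, which is the claim.

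I expect the main obstacle to be precisely this last sign bookkeeping: one must verify carefully that restricting the preimage orientation of $C$ to $\partial C$ reproduces $\operatorname{I}(f,N)_x$ and $\operatorname{I}(g,N)_x$ with the correct relative sign coming from the product orientation on $M\times[0,1]$ and the $\pm\partial_t$ outward conormals. The relative transversality and the dimension count are routine; the orientation comparison is where the argument genuinely has to be carried out by hand.
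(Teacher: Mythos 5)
Your proposal is correct and follows essentially the same route as the paper: a smooth homotopy made transversal to $N$ rel the ends, the dimension count using $N_{n-1}=\emptyset$ to show the lower strata are missed, and the oriented compact $1$--manifold $H\inv(N_n)$ whose boundary signs sum to zero, giving $\operatorname{I}(f,N)=\operatorname{I}(g,N)$. The only difference is presentational: the paper delegates the final orientation bookkeeping to \cite[Lemma 5.1.1, Lemma 5.1.2]{hirsch}, whereas you propose to carry it out by hand (and you make explicit the relative-transversality step that the paper leaves implicit).
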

\begin{proof}
Take $H:M\times [0,1]\longrightarrow W$ such that $H(\cdot,0)=f$ and  $H(\cdot,1)=g$. We may assume that $H$ is smooth and $H\pitchfork N$. Since $N_{n-1}=\emptyset$, we have $H\pitchfork N_n$ and $H\inv(N_i)=\emptyset$ for $i=1,\ldots n-1$.

Let us see that $H\inv(N)=H\inv(N_n)$ is a compact $1$--dimensional submanifold of $M\times[0,1]$ and $H|_{M\times\{0\}}\pitchfork N$ and $H|_{M\times\{1\}}\pitchfork N$. According to \cite[p. 60]{gupol} we have $$\partial H\inv (N)=H\inv (N)\cap \partial (M\times [0,1])=f\inv (N)\times\{0\} \cup g\inv (N)\times\{1\}.$$ 

Similarly as in \cite{hirsch}, using methods from \cite[Lemma 5.1.1, Lemma 5.1.2]{hirsch}, one may show that
\[
\operatorname{I}(H|\partial (M\times [0,1]),N)=0. 
\]
Since $\operatorname{I}(H|\partial (M\times [0,1]),N)=\operatorname{I}(f,N)-\operatorname{I}(g,N)$, we get $\operatorname{I}(f,N)=\operatorname{I}(g,N)$.
\end{proof}

Now we can define the intersection number of a continuous map $g:M\longrightarrow W$, as $\operatorname{I}(g,N):=\operatorname{I}(f,N)$, where $f\pitchfork N$ and $f$ is smooth and homotopic to $g$.

\bigskip

If $W$, $N_n$ or $M$  is not oriented, one can define the intersection number of the mapping modulo $2$.

\begin{definition}
For $f\pitchfork N$ we define the intersection number of $f$ (with $N$) modulo $2$ as $$\operatorname{I}_2(f,N)=\#f\inv(N)\mod \ 2.$$
\end{definition}

Since the boundary of a compact $1$--dimensional manifold has an even number of points, we get the following.

\begin{theorem} \label{index}
If $f,g:M\longrightarrow W$ are smooth maps such that $f\pitchfork N$, $g\pitchfork N$ and $f$ and $g$ are homotopic, then $\operatorname{I}_2(f,N)=\operatorname{I}_2(g,N)$.
\end{theorem}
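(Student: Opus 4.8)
The plan is to mirror the argument given for the oriented case, replacing the signed count of intersection points by a count modulo $2$ and invoking the classification of compact $1$--manifolds in place of the vanishing of the boundary sum. First I would choose a homotopy $H\colon M\times[0,1]\ar W$ with $H(\cdot,0)=f$ and $H(\cdot,1)=g$. Since $f\pitchfork N$ and $g\pitchfork N$ already hold on the two boundary components $M\times\{0\}$ and $M\times\{1\}$, the relative transversality theorem (cf. \cite[II.4.9, II.4.12]{golub}) lets me perturb $H$, without altering it near $M\times\{0,1\}$, so that $H$ is smooth and $H\pitchfork N$ while still $H|_{M\times\{0\}}=f$ and $H|_{M\times\{1\}}=g$.

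Next, because $N_{n-1}=\emptyset$ and every nonempty stratum $N_i$ with $i\leqslant n-2$ satisfies $\codim N_i=m+n-i\geqslant m+2>m+1=\dim(M\times[0,1])$, transversality forces $H\inv(N_i)=\emptyset$ for all $i\leqslant n-1$. Hence $H\inv(N)=H\inv(N_n)$ is a compact $1$--dimensional submanifold of $M\times[0,1]$, and exactly as in the oriented case, by \cite[p. 60]{gupol},
\[
\partial H\inv(N)=H\inv(N)\cap\partial(M\times[0,1])=f\inv(N)\times\{0\}\cup g\inv(N)\times\{1\}.
\]

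Finally, since a compact $1$--dimensional manifold has a boundary consisting of an even number of points, the cardinality $\#f\inv(N)+\#g\inv(N)$ is even, which yields $\#f\inv(N)\equiv\#g\inv(N)\pmod 2$, that is $\operatorname{I}_2(f,N)=\operatorname{I}_2(g,N)$. The only delicate point in this plan is the relative transversality step: one must guarantee that $H$ can be made transversal to all strata $N_i$ simultaneously without disturbing the boundary maps $f$ and $g$, which are already transversal. Once that is secured, everything else is formal, and no orientation data is needed, which is precisely what makes the modulo $2$ statement simpler than its oriented counterpart.
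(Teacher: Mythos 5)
Your proposal is correct and follows essentially the same route as the paper: the paper proves the mod $2$ statement by repeating the construction from the oriented case (a smooth homotopy $H\pitchfork N$, the observation that $H\inv(N)=H\inv(N_n)$ is a compact $1$--manifold with $\partial H\inv(N)=f\inv(N)\times\{0\}\cup g\inv(N)\times\{1\}$ by \cite[p.~60]{gupol}), and then concludes from the fact that a compact $1$--manifold has an even number of boundary points. Your explicit codimension count for the lower strata and your attention to making the perturbation of $H$ relative to the boundary are exactly the details the paper leaves implicit, so nothing is missing.
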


Then the intersection number modulo $2$ of a continuous map $g:M\longrightarrow W$ is $\operatorname{I}_2(g,N):=\operatorname{I}_2(f,N)$, where $f\pitchfork N$ and $f$ is smooth and homotopic to $g$.

\medskip

If $M$ is allowed to have a boundary, $\operatorname{I}(f,N)$ and $\operatorname{I}_2(f,N)$ are defined in the same way, but only for maps fulfilling $f(\partial M)\cap N=\emptyset$. In this case we should take the map $g:M\longrightarrow W$ homotopic to $f$ by a homotopy that takes $\partial M$ into $W\setminus N$ at each stage. Of  course $\operatorname{I}(f,N)$ and $\operatorname{I}_2(f,N)$ are invariants only of this kind of homotopy. 

\begin{rem}
Of course if $M$ is a manifold without boundary, and $W=\R^{n+m}$, then the appropriate intersection number of a map $M\to W$ with any $N$ is equal to $0$.
\end{rem}

\section{Intersection number of a map into $M_k(\R^n)$} \label{inter_matrix}

In this section we show that the intersection number of mappings into $M_k(\R^n)$ can be expressed in terms of the topological degree of $\widetilde{a}$ (see Section \ref{prel}). It will turn out (see Section \ref{counting}) that this provides an effective algorithm of computing it in a polynomial case. Moreover, in some special case we connect our intersection number with other invariant associated with a mapping from an $(n-k)$--dimensional sphere to the Stiefel manifold.

\bigskip

If we take a smooth mapping $a$ from an $(n-k+1)$--dimensional compact manifold $M$ with boundary to $M_k(\R^n)$ satisfying (\ref{zalozenia}) as in Section \ref{prel}, then the intersection number $\operatorname{I}(a,\Sigma)$ or $\operatorname{I}_2(a,\Sigma)$ of the map $a$ (see Section \ref{indeks}) is well defined.

\subsection{The case $n-k$ even}

Let $n-k> 0$ be an even number and let $M$ be an $(n-k+1)$--dimensional oriented compact manifold with boundary. In this case $\Sigma ^i$ are orientable (see \cite{ando}). Take a smooth mapping $a:M\longrightarrow M_k(\R^n)$ satisfying (\ref{zalozenia}) in Section \ref{prel}. Note that $a|\partial M:\partial M\longrightarrow \widetilde{V}_k(\R^n)\subset M_k(\R^n)$.

In \cite{krzyzszafran} the authors defined a homotopy invariant $\Lambda$ associated with a map from an $(n-k)$--dimensional boundaryless manifold into the Stiefel manifold $\widetilde{V}_k(\R^n)$. When this manifold is a sphere, $\Lambda$ induces an isomorphism between $\Z$ and $(n-k)$--th homotopy group of $\widetilde{V}_k(\R^n)$.

In our case this $\Lambda (a|\partial M)$ is well defined, and following \cite[Section 2]{krzyzszafran} we will define 
\[
\Lambda (a|\partial M)=\frac{1}{2} \deg (\widetilde{a}|S^{k-1}\times \partial M), 
\]
where $\widetilde{a}|S^{k-1}\times \partial M\colon S^{k-1}\times \partial M \to \R^n\setminus \{0\}$.

\begin{theorem} \label{index_even}
Let $n-k> 0$ be even. There is an orientation of $\Sigma^1$ such that for each smooth manifold $M$ ($(n-k+1)$--dimensional, oriented, compact, with boundary) and smooth $a:M\longrightarrow M_k(\R^n)$ with $a(\partial M)\cap \Sigma=\emptyset$ we have
\[
\operatorname{I}(a,\Sigma)= \Lambda (a|\partial M).
\]

In the case where $a\inv (\Sigma)=a\inv (\Sigma^1)$ is a finite set, 
\[
\operatorname{I}(a,\Sigma)= \frac{1}{2}\sum\limits_{(\beta,x)\in \widetilde{a}\inv(0)}\deg (\widetilde{a},(\beta,x)).
\]
\end{theorem}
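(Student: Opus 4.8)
The plan is to reduce everything to the transversal case and then carry out a local computation identifying the intersection sign $\operatorname{I}(a,\Sigma^1)_x$ with the local topological degree of $\widetilde{a}$. First I would exploit homotopy invariance: since $\Sigma$ is closed and $a(\partial M)\cap\Sigma=\emptyset$, I can perturb $a$ away from a neighbourhood of $\partial M$ to a map $a'$ with $a'\pitchfork\Sigma$ and $a'=a$ near $\partial M$. Because $\codim\Sigma^i=(n-k+i)i>n-k+1=\dim M$ for $i\geqslant 2$, transversality forces $a'^{-1}(\Sigma^i)=\emptyset$, so $a'\inv(\Sigma)=a'\inv(\Sigma^1)$ is a finite set. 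The oriented homotopy--invariance theorem of Section~\ref{indeks} gives $\operatorname{I}(a,\Sigma)=\operatorname{I}(a',\Sigma)$, while $\Lambda(a|\partial M)=\Lambda(a'|\partial M)$ since the two maps agree on $\partial M$. Thus it suffices to treat a transversal $a$ with $a\inv(\Sigma)=a\inv(\Sigma^1)$ finite, which is exactly the setting of the second formula.

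The two formulas are then linked by a single standard fact. The product $S^{k-1}\times M$ is a compact oriented $n$--manifold with $\partial(S^{k-1}\times M)=S^{k-1}\times\partial M$, and $\widetilde{a}(\beta,x)=a(x)\beta$ is nonzero on this boundary by (\ref{zalozenia}); hence $\widetilde{a}\colon(S^{k-1}\times M,\partial)\to(\R^n,\R^n\setminus\{0\})$ and, by the additivity of the topological degree (cf.\ Section~\ref{prel}), $\deg(\widetilde{a}|S^{k-1}\times\partial M)=\sum_{(\beta,x)\in\widetilde{a}\inv(0)}\deg(\widetilde{a},(\beta,x))$. Dividing by $2$ turns the definition of $\Lambda$ into $\Lambda(a|\partial M)=\tfrac12\sum_{(\beta,x)}\deg(\widetilde{a},(\beta,x))$, so once the second displayed formula is proved the first follows at once.

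The heart of the argument is the local comparison at a point $x\in a\inv(\Sigma^1)$. Writing $A=a(x)$, the kernel of $A$ is spanned by the relevant $\beta$ and, since $\rank A=k-1$, the normal space $T_AM_k(\R^n)/T_A\Sigma^1$ is canonically $\R^n/\operatorname{im}A$ via $B\mapsto[B\beta]$; here I would use the standard description $T_A\Sigma^1=\{B:B\beta\in\operatorname{im}A\}$. Decomposing $D\widetilde{a}(\beta,x)(\dot\beta,\dot x)=A\dot\beta+(Da(x)[\dot x])\beta$, with $A\dot\beta\in\operatorname{im}A$ for $\dot\beta\in T_\beta S^{k-1}=\beta^\perp$, the map $D\widetilde{a}(\beta,x)$ becomes block triangular with respect to $\R^n=\operatorname{im}A\oplus(\R^n/\operatorname{im}A)$: the diagonal blocks are the isomorphism $A|_{\beta^\perp}\colon\beta^\perp\to\operatorname{im}A$ and precisely the map $\eta$ from the definition of $\operatorname{I}(a,\Sigma^1)_x$. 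Consequently $\operatorname{I}(a,\Sigma^1)_x$ and $\deg(\widetilde{a},(\beta,x))=\sgn\det D\widetilde{a}(\beta,x)$ differ by a universal orientation factor coming from the ordering of this splitting and the sign of $A|_{\beta^\perp}$; I would then fix the orientation of $\Sigma^1$ (available because $n-k$ is even, so $\Sigma^1$ is orientable) to absorb this factor and obtain $\operatorname{I}(a,\Sigma^1)_x=\deg(\widetilde{a},(\beta,x))$.

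Finally I would record the symmetry that makes the two preimages $(\beta,x),(-\beta,x)$ contribute equally. From $\widetilde{a}=(-\operatorname{id}_{\R^n})\circ\widetilde{a}\circ\sigma$, where $\sigma(\beta,x)=(-\beta,x)$, one gets $\sgn\det D\widetilde{a}(\beta,x)=(-1)^{n}(-1)^{k}\sgn\det D\widetilde{a}(-\beta,x)$, and $(-1)^{n+k}=1$ because $n-k$ is even; hence $\deg(\widetilde{a},(\beta,x))=\deg(\widetilde{a},(-\beta,x))$. Summing the local identity over $a\inv(\Sigma^1)$ and using this symmetry yields $\operatorname{I}(a,\Sigma)=\sum_x\deg(\widetilde{a},(\beta,x))=\tfrac12\sum_{(\beta,x)\in\widetilde{a}\inv(0)}\deg(\widetilde{a},(\beta,x))$, the second formula; combined with the previous paragraph this also gives $\operatorname{I}(a,\Sigma)=\Lambda(a|\partial M)$. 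The main obstacle I anticipate is the orientation bookkeeping in the local comparison: one must check that the universal sign factor is genuinely independent of $x$, of the sign chosen for $\beta$, and of the auxiliary orientations of $\operatorname{im}A$ and $\beta^\perp$, so that a single coherent orientation of $\Sigma^1$ works simultaneously for all admissible $M$ and $a$; this is precisely the step where the parity of $n-k$ is essential.
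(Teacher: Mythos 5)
Your reduction steps (perturbation rel a neighbourhood of $\partial M$, emptiness of the preimages of $\Sigma^i$, $i\geqslant 2$, by the codimension count, and the identity $\Lambda(a|\partial M)=\tfrac12\sum\deg(\widetilde{a},(\beta,x))$ via additivity of the degree, together with the antipodal symmetry argument) match the paper's, but your core local comparison is genuinely different. The paper orients $\Sigma^1$ explicitly through local defining equations $f(S)=A-BD\inv C$ (corrected by the factor $(-1)^{k-1}$), verifies $\operatorname{I}(a,\Sigma)_x=\deg(\widetilde{a},(\beta,x))$ by a determinant computation only at points where $a(x)$ is in a normal form (zero first column, $\det D>0$), and then needs a separate technical lemma (Lemma \ref{technical_matrix}), saying that elementary row and column operations preserve the equality of the two local invariants, in order to transport that computation to an arbitrary intersection point. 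Your intrinsic route --- $T_A\Sigma^1=\{B: B\beta\in\operatorname{im}A\}$, the identification $\iota_\beta\colon T_AM_k(\R^n)/T_A\Sigma^1\to\R^n/\operatorname{im}A$, $[B]\mapsto[B\beta]$, and the block--triangularity of $D\widetilde{a}(\beta,x)$ --- eliminates both the normal form and the operations lemma, and it makes visible exactly where each hypothesis enters.

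However, the step you defer as ``the main obstacle I anticipate'' is not a side issue: it is precisely the content of the clause ``there is an orientation of $\Sigma^1$'', and your proof is incomplete until it is carried out. It does close, as follows. Write the comparison factor at $(A,\beta)$ as $c=\sgn\det\bigl(A|_{\beta^\perp}\bigr)\cdot\varepsilon$, where $\varepsilon$ compares the orientation that $\iota_\beta$ pulls back from $\R^n/\operatorname{im}A$ with the normal orientation of $\Sigma^1$, and where the auxiliary orientations of $\operatorname{im}A$ and $\R^n/\operatorname{im}A$ are coupled so that together they give the standard orientation of $\R^n$. (i) Reversing the auxiliary orientation of $\operatorname{im}A$ flips both factors, so $c$ is independent of it. (ii) Replacing $\beta$ by $-\beta$ reverses the orientation of $T_\beta S^{k-1}$ on the fixed subspace $\beta^\perp$, flipping the first factor, and replaces $\iota_\beta$ by $-\iota_\beta$, which flips $\varepsilon$ because $\dim\R^n/\operatorname{im}A=n-k+1$ is odd; the two flips cancel. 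This is exactly where the parity of $n-k$ is used --- for $n-k$ odd the factor would change sign, consistent with the non--orientability of $\Sigma^1$ in that case. (iii) Once $\Sigma^1$ carries a global orientation (it is orientable for $n-k$ even, by the paper's citation of \cite{ando}), $c$ is locally constant in $A$, and $\Sigma^1$ is connected, so $c$ is a single global sign, which you then absorb into the choice of orientation of $\Sigma^1$. With (i)--(iii) made explicit, your argument is a complete proof, and arguably a cleaner one than the paper's.
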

\begin{proof}
At the beginning of the proof we fix the orientation of $\Sigma ^1$. In the second step we prove that for the mapping $a$ transversal to $\Sigma ^1$ the intersection number of $a$ and the local topological degree of $\widetilde{a}$ locally coincide. At the end we have to use a technical lemma (see Lemma \ref{technical_matrix}) saying that each point of the intersection of $a$ with $\Sigma ^1$ can be transformed to the form to which we can apply the second step of the proof.

One may represent any matrix $S\in M_k(\R^n)$ as
\begin{equation}
\left [
\begin{array}{cc}
A_{(n-k+1)\times 1} & B_{(n-k+1)\times (k-1)} \cr
C_{(k-1)\times 1} & D_{(k-1)\times (k-1)} 
\end{array}
\right ].  \label{matrix_form}
\end{equation}
If $\det D\neq 0$, then by \cite[Chapter II, Lemma 5.2, Proposition 5.3]{golub} $S\in \Sigma^1$ if and only if $A-BD\inv C=0$. Moreover the map $f(S)=A-BD\inv C\in \R^{n-k+1}$ is such that $f^{-1}(0)$ is locally a complete intersection and coincides with $\Sigma ^1$. Then in a neighbourhood of matrices such that $\det D>0$ the map $f$ gives a natural  orientation of $\Sigma ^1$ in the following way: vectors $v_{n-k+2},\ldots ,v_{nk}$ are well--oriented in the tangent space $T_S\Sigma^1$ if and only if vectors $\nabla f_1(S),\ldots ,\nabla f_{n-k+1}(S),v_{n-k+2},\ldots ,v_{nk}$ are positively oriented in $\R^{nk}$ (there $\nabla f_i$ denotes the gradient of $i$--th coordinate of $f$). Note that $\Sigma ^1$ is connected, and since $n-k$ is even, it is also orientable. Let $\theta$ be the orientation of $\Sigma^1$ that agrees locally with the orientation defined above. From now on we treat $\Sigma^1$ as an oriented manifold $(\Sigma^1,(-1)^{k-1}\theta)$.

By elementary column and row operations each matrix $S\in \Sigma^1$ can be transformed to the form (\ref{matrix_form}), where $A$, $B$, $C$ are zero matrices, and $\det D>0$.

It is sufficient to show the conclusion of this Theorem for $a\pitchfork \Sigma$. Indeed, every $a$ satisfying $a(\partial M)\cap \Sigma=\emptyset$ is homotopic with some mapping which is transversal to $\Sigma$ (by the density of the set of transversal maps). We can choose this homotopy in such a way, that it takes $\partial M$ into $M_k(\R^n)\setminus \Sigma$ (because $a^{-1}(\Sigma)\subset M\setminus \partial M$ is a compact set). Moreover, the intersection number and the invariant $\Lambda$ are invariants of such a homotopy (or its restriction to the boundary), see Section \ref{indeks} and \cite{krzyzszafran}. 

Let us assume that $a\pitchfork \Sigma$. Then $a\inv (\Sigma)=a\inv (\Sigma^1)$ is a finite set. With each point $x\in a\inv (\Sigma)$  we can associate $(\beta, x),(-\beta, x)\in \widetilde{a}\inv (0)$. According to \cite[Theorem 2.3]{krzyznowel} $(\beta, x)$, $(-\beta, x)$ are regular points. By \cite[Proposition 2.4]{krzyzszafran} $\deg (\widetilde{a},(\beta,x))=\deg (\widetilde{a},(-\beta,x))$, moreover $\Lambda (a|\partial M)=\frac{1}{2}\sum \limits_{(\beta,x)\in \widetilde{a}\inv (0)}\deg (\widetilde{a},(\beta,x))$.

Assume that $x\in a\inv (\Sigma)$ is such that
\[
a(x)=\left [
\begin{array}{c|c}
0 &  \cr
\vdots & \mathbf{0}_{(n-k+1)\times (k-1)} \cr
0 &  \cr
\hline
0 &  \cr
\vdots & D_{(k-1)\times (k-1)} \cr
0 &  \cr
\end{array}
\right ],
\] 
where $\det D> 0$. 

Since $x$ is fixed, we may treat $a$ near $x$ as a mapping $(\R^{n-k+1},0)\to M_k(\R^n)$, using a local coordinate system. 

Let us remind that $\operatorname{I}(a,\Sigma)_x$ depends on $\eta\colon T_x M \longrightarrow T_{a(x)}M_k(\R^n)/T_{a(x)}\Sigma^1$. It is easy to verify that the orientation of $T_{a(x)}M_k(\R^n)/T_{a(x)}\Sigma^1$ is given by $n-k+1$ vectors $((-1)^{k-1},0,\ldots ,0),(0,1,\ldots ,0),\ldots ,(0,\ldots ,0,1,0,\ldots ,0)$. So
\[
\operatorname{I}(a,\Sigma)_x= (-1)^{k-1}\sgn \det 
\left [\frac{\partial(a_{1}^1,\ldots ,a^{n-k+1}_1)}{\partial x}(x)\right ].
\]
Note that $((\pm 1,0,\ldots ,0),x)\in \widetilde{a}\inv (0)$. As in \cite[Lemma 3.2]{szafraniec1} we obtain
\[
\deg (\widetilde{a},((1,0,\ldots ,0),x)) = \deg ((\beta_1^2+\ldots +\beta_k^2-1,\widetilde{a}),((1,0,\ldots ,0),x))=
\]
\[
=\sgn \det
\left [
\begin{array}{ccccccc}
1 & 0 & \dots & 0 & 0 & \ldots & 0\cr
 & & a(x) & & & \frac{\partial(a^{1}_1,\ldots ,a^{n}_1)}{\partial x}(x) & 
\end{array}
\right ].
\]
Since $\det D>0$, we can reformulate the last term in the following way:
\[
\sgn \det
\left [
\begin{array}{ccccccc}
1 & 0 & \dots & 0 & 0 & \ldots & 0\cr
 & & a(x) & & & \frac{\partial(a^{1}_1,\ldots ,a^{n}_1)}{\partial x}(x) & 
\end{array}
\right ]=
\]
\[
=(-1)^{k-1}\sgn \det 
\left [\frac{\partial(a^{1}_1,\ldots ,a^{n-k+1}_1)}{\partial x}(x)\right ]=\operatorname{I}(a,\Sigma)_x.
\]
To prove that $\operatorname{I}(a,\Sigma)=\sum \limits_{x\in a\inv (\Sigma)}\operatorname{I}(a,\Sigma)_x=\frac{1}{2}\sum\limits_{(\beta,x)\in \widetilde{a}\inv(0)}\deg (\widetilde{a},(\beta,x))$ it is sufficient to show, that the composition of elementary column and row operations preserves the equality between $\deg (\widetilde{a},(\beta,x))$ and $\operatorname{I}(a,\Sigma)_x$ for $(\beta,x)\in \widetilde{a}\inv (0)$, as is proved in the next Lemma.
\end{proof}

\begin{lemma} \label{technical_matrix}
Let $\Phi \colon M_k(\R^n) \longrightarrow M_k(\R^n)$ be an elementary column or row operation, and $x\in a\inv (\Sigma^1)$, $(\beta,x)\in \widetilde{a}\inv (0)$. Assume that $\operatorname{I}(a,\Sigma)_x=\deg (\widetilde{a},(\beta,x))$. Then there exists such $\bar{\beta}$ that $\widetilde{\Phi(a)}(\bar{\beta},x)=0$, and 
\[
\operatorname{I}(\Phi(a),\Sigma)_x=\deg (\widetilde{\Phi(a)},(\bar{\beta},x)). 
\]
\end{lemma}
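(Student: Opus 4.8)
The plan is to show that under any single elementary operation $\Phi$ both sides of the asserted equality are multiplied by the \emph{same} sign, namely $\sgn$ of the determinant of the corresponding elementary matrix; since $\Phi$ is invertible this transfers the equality from $a$ to $\Phi(a)$. I first record how $\widetilde{\Phi(a)}$ arises. Writing a column operation as $\Phi(a)=aQ$ with $Q\in\mathrm{GL}_k(\R)$, one has $\widetilde{\Phi(a)}(\beta,x)=a(x)Q\beta=\widetilde{a}(Q\beta,x)$, so that $Q\inv\beta$ spans $\Ker(a(x)Q)$ and the required direction is $\bar\beta=Q\inv\beta/|Q\inv\beta|$. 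For a row operation $\Phi(a)=Pa$ with $P\in\mathrm{GL}_n(\R)$ one has $\widetilde{\Phi(a)}=P\circ\widetilde{a}$ and simply $\bar\beta=\beta$. This produces $\bar\beta$ and reduces everything to a sign comparison.

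For the local degree the computation is clean. Using $\widetilde{aQ}(\beta,x)=|Q\beta|\,\widetilde{a}(Q\beta/|Q\beta|,x)$ and that the positive factor $|Q\beta|$ does not change a local degree, I get $\deg(\widetilde{\Phi(a)},(\bar\beta,x))=\deg(\Psi_Q,\bar\beta)\cdot\deg(\widetilde{a},(\beta,x))$, where $\Psi_Q\colon S^{k-1}\to S^{k-1}$, $\Psi_Q(\beta)=Q\beta/|Q\beta|$, satisfies $\Psi_Q(\bar\beta)=\beta$ and is a diffeomorphism of degree $\sgn\det Q$. Thus the degree is multiplied by $\sgn\det Q$; in the row case $\deg(P\circ\widetilde{a},(\beta,x))=\sgn(\det P)\deg(\widetilde{a},(\beta,x))$, so the factor is $\sgn\det P$.

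For the intersection number I would argue by a homotopy reduction. Every $Q\in\mathrm{GL}_k(\R)$ induces a linear diffeomorphism $S\mapsto SQ$ of $M_k(\R^n)$ preserving each $\Sigma^i$ and hence preserving transversality to $\Sigma^1$; so along any path $Q_t$ in $\mathrm{GL}_k(\R)$ both $\operatorname{I}(aQ_t,\Sigma)_x$ and $\deg(\widetilde{aQ_t},(\bar\beta_t,x))$ are signs of continuously varying nonzero determinants, hence locally constant. As $\mathrm{GL}_k(\R)$ has two components, each multiplicative factor depends only on $\sgn\det Q$: for $\det Q>0$ we join $Q$ to the identity and both factors equal $+1$, so the equality is preserved trivially. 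The same reasoning applies to rows, and it reduces the whole lemma to checking a single representative with negative determinant in each of the two cases.

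The remaining step, and the main obstacle, is the orientation bookkeeping for $\operatorname{I}$ at those representatives, which I would take to be the negation of the first column and of the first row. I would compute at a normal-form point, where $a(x)$ has first column zero, $\det D>0$ and $\Ker a(x)=\R e_1$; negation then fixes $a(x)$ and acts linearly on the normal space $T_{a(x)}M_k(\R^n)/T_{a(x)}\Sigma^1\cong\R^{n-k+1}$. Negating the first column acts there as $-\mathrm{Id}$, of determinant sign $(-1)^{n-k+1}$, while negating the first row acts as $\mathrm{diag}(-1,1,\dots,1)$, of sign $-1$; equivalently both follow from the formula $\operatorname{I}(a,\Sigma)_x=(-1)^{k-1}\sgn\det[\partial(a_1^1,\dots,a_1^{n-k+1})/\partial x]$ obtained above, by tracking which rows get negated. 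Here is exactly where the hypothesis that $n-k$ is even is used: it makes $(-1)^{n-k+1}=-1$, so in both cases $\operatorname{I}$ is multiplied by $-1=\sgn\det$, matching the degree factor. One must also check that the $(-1)^{k-1}\theta$ orientation convention for $\Sigma^1$ is respected and that, by connectedness of $\Sigma^1$, this sign is independent of the chosen point, so that the normal-form computation is legitimate. Combining the two equal factors yields $\operatorname{I}(\Phi(a),\Sigma)_x=\deg(\widetilde{\Phi(a)},(\bar\beta,x))$.
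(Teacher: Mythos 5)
Your proof is correct, and it reaches the conclusion of Lemma \ref{technical_matrix} by a route whose skeleton matches the paper's --- under $\Phi$ each side of the equality is multiplied by a sign, one shows the two signs coincide, and connectedness of $\Sigma^1$ is invoked so that the orientation effect may be computed at a single convenient point --- but whose execution is genuinely different. The paper runs a case analysis over the three types of elementary operations, recording for each whether $\Phi$ and $\Phi|\Sigma^1$ preserve or reverse the orientations of $M_k(\R^n)$ and $\Sigma^1$ (left as ``one can check''), and on the degree side introduces diffeomorphisms $\Psi_1,\Psi_2$ with the equality $\sgn t=\sgn s$ asserted rather than computed. You instead (i) obtain the degree factor in closed form, $\sgn\det Q$ resp.\ $\sgn\det P$, from the degree of the sphere map $\beta\mapsto Q\beta/|Q\beta|$ (resp.\ outer composition with $P$), and (ii) use that $\mathrm{GL}_k(\R)$ and $\mathrm{GL}_n(\R)$ have exactly two path components, together with local constancy of both local invariants along a path $Q_t$, to reduce the whole lemma to two representatives, negation of the first column and of the first row; there the intersection factor is the determinant of the induced action on $T_{a(x)}M_k(\R^n)/T_{a(x)}\Sigma^1$ at a normal-form fixed point, namely $(-1)^{n-k+1}$ and $-1$. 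Your route buys uniformity (it in fact proves the statement for arbitrary $Q\in\mathrm{GL}_k(\R)$ and $P\in\mathrm{GL}_n(\R)$, not only elementary matrices), explicit verification of the steps the paper delegates to the reader, and a precise identification of where evenness of $n-k$ enters ($(-1)^{n-k+1}=-1$, consistent with $\Sigma^1$ being non-orientable when $n-k$ is odd); the paper's route is shorter and stays within exactly the operations used in the proof of Theorem \ref{index_even}. One point you should make fully explicit: passing from the normal-form fixed point to the given, arbitrary $a(x)$ requires the standard fact that the sign of the induced map on normal quotients equals the product of the orientation effect of $\Phi$ on $M_k(\R^n)$ and that of $\Phi|\Sigma^1$ on $\Sigma^1$, both of which are global constants (by linearity of $\Phi$, resp.\ by connectedness of $\Sigma^1$); your phrase ``independent of the chosen point'' is exactly this device, the same one the paper uses when it checks orientations ``at some fixed point of $\Phi|\Sigma^1$''.
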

\begin{proof}
It is obvious that $\Phi$ and $\Phi|\Sigma ^1\colon \Sigma^1 \longrightarrow \Sigma ^1$ are diffeomorphisms.
Note that the following diagram commutes.
\[
\begin{array}{ccc}
T_xM & \stackrel{\eta_1}{\longrightarrow} & T_{a(x)}M_k(\R^n)/T_{a(x)}\Sigma^1 \\
\shortparallel &       & \downarrow \\  
T_xM & \stackrel{\eta_2}{\longrightarrow} & T_{\Phi(a)(x)}M_k(\R^n)/T_{\Phi(a)(x)}\Sigma^1
\end{array}
\]
So $\operatorname{I}(a,\Sigma)_x=(\sgn s)\cdot  \operatorname{I}(\Phi(a),\Sigma)_x$, where $s$ depends on the way $\Phi$ acts on $M_k(\R^n)$ and $\Sigma^1$.

One can check, that
\begin{enumerate}
\item if $\Phi$ multiplies one column or row by $c\neq 0$, then   
 \begin{itemize}
 \item $\Phi$ reverses the orientation on $M_k(\R^n)$ if and only if $c<0$ and $n$ is odd,
 \item $\Phi|\Sigma ^1$ reverses the orientation on $\Sigma ^1$ if and only if $c<0$ and $n$ is even,
 \end{itemize}
 and in this case $s=c$;
\item if $\Phi$ interchanges two subsequent columns or rows, then 
 \begin{itemize}
 \item $\Phi$ reverses the orientation on $M_k(\R^n)$ if and only if $n$ is odd,
 \item $\Phi|\Sigma ^1$ reverses the orientation on $\Sigma ^1$ if and only if $n$ is even,
 \end{itemize}
 and in this case $s=-1$;
\item if $\Phi$ replaces the se\-cond column (or row) by the sum of the first and second column (or row), then 
 $\Phi$ and $\Phi|\Sigma ^1$ always preserves the orientation,  and in this case $s=1$.
\end{enumerate}

To verify whether $\Phi|\Sigma ^1$ preserves or reverses the orientation, it is enough to check it at some fixed point of $\Phi|\Sigma ^1$.

Note that there exist two diffeomorphisms $\Psi_1\colon S^{k-1}\longrightarrow S^{k-1}$, $\Psi_2\colon \R^n \longrightarrow \R^n$ such that the following diagram commutes
\[
\begin{array}{ccc}
S^{k-1}\times M & \stackrel{\widetilde{a}}{\longrightarrow} & \R^n \\
{\scriptstyle (\Psi_1, id)}\downarrow &       & \downarrow {\scriptstyle \Psi_2} \\  
S^{k-1}\times M & \stackrel{\widetilde{\Phi(a)}}{\longrightarrow} & \R^n,
\end{array}
\]
and $\widetilde{a}(\beta,x)=0 \ \Leftrightarrow \ \widetilde{\Phi(a)}(\Psi_1(\beta),x)=0$. Note that if $\Phi$ is a column operation, then $\Psi_2=id$, and if $\Phi$ is a row operation, then $\Psi_1=id$.

So $\deg(\widetilde{a},(\beta,x))=(\sgn t)\cdot  \deg(\widetilde{\Phi(a)},(\Psi_1(\beta),x))$, where $t$ depends on the way $\Psi_1$ and $\Psi_2$ acts on $S^{k-1}$ and $\R^n$. One can verify that $\sgn t$ and $\sgn s$ coincide. Since $\operatorname{I}(a,\Sigma)_x=\deg (\widetilde{a},(\beta,x))$, we have $\operatorname{I}(\Phi(a),\Sigma)_x=\deg (\widetilde{\Phi(a)},(\bar{\beta},x))$. 
\end{proof}

\subsection{The case $n-k$ odd}

Let $n-k> 0$ be an odd number and let $M$ be an $(n-k+1)$--dimensional compact manifold with boundary, not necessarily orientable. In this case $\Sigma ^i$ are non--orientable (see \cite{ando}).

\begin{theorem} \label{sumastopni}
Let us assume that $a:M\longrightarrow M_k(\R^n)$ is such that $a(\partial M)\cap \Sigma=\emptyset$. The set $\widetilde{a}\inv(0)$ is finite if and only if $a\inv(\Sigma)=a\inv(\Sigma^1)$ and $a\inv(\Sigma^1)$ is finite. If that is the case, then
$$
\operatorname{I}_2(a,\Sigma)=\sum _{(\beta, x)}\deg_2(\widetilde{a},(\beta, x))\mod 2,
$$
where $(\beta, x)\in \widetilde{a} \inv (0)$ and we choose only one from each pair $(\beta, x), (-\beta, x)\in \widetilde{a} \inv (0)$ ($(\beta, x)$ runs through half of the zeros of $\widetilde{a}$). 
\end{theorem}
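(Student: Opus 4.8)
The plan is to reduce Theorem \ref{sumastopni} to a counting argument connecting the finite set $a\inv(\Sigma)$ with the finite fiber $\widetilde{a}\inv(0)$, mirroring the structure of the even case but working only modulo $2$ so that no orientation of $\Sigma^1$ is needed. First I would establish the equivalence of finiteness. Recall from Section \ref{prel} that $a(x)\in\Sigma$ exactly when $\widetilde{a}(\beta,x)=0$ for some $\beta\in S^{k-1}$. If $a(x)\in\Sigma^i$ with $i\geqslant 2$, then the kernel of $a(x)$ has dimension $\geqslant 2$, so $\{\beta\in S^{k-1}\colon \widetilde{a}(\beta,x)=0\}$ is a sphere of dimension $\geqslant 1$ and is therefore infinite; hence $\widetilde{a}\inv(0)$ can be finite only if $a\inv(\Sigma^{\geqslant 2})=\emptyset$, i.e. $a\inv(\Sigma)=a\inv(\Sigma^1)$. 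Conversely, if $a\inv(\Sigma)=a\inv(\Sigma^1)$ is finite, then each $x$ in it contributes exactly the two points $(\beta,x),(-\beta,x)$ to $\widetilde{a}\inv(0)$ (as noted in Section \ref{prel}), so $\widetilde{a}\inv(0)$ is finite with $\#\widetilde{a}\inv(0)=2\cdot\#a\inv(\Sigma)$.

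Next I would prove the degree formula under the transversality hypothesis $a\pitchfork\Sigma$, then remove it by homotopy as in Theorem \ref{index_even}. By density of transversal maps, $a$ is homotopic — through a homotopy carrying $\partial M$ into $M_k(\R^n)\setminus\Sigma$, since $a\inv(\Sigma)$ is a compact subset of $M\setminus\partial M$ — to some $a'$ with $a'\pitchfork\Sigma$. Both sides of the claimed identity are invariants of this homotopy: the left side $\operatorname{I}_2(a,\Sigma)$ by Theorem \ref{index}, and the right side because $\deg_2(\widetilde{a}|S^{k-1}\times\partial M)$ (the boundary degree mod $2$, which equals the sum of local degrees) is a homotopy invariant of the boundary map $\widetilde{a}|S^{k-1}\times\partial M\colon S^{k-1}\times\partial M\to\R^n\setminus\{0\}$. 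So it suffices to treat $a\pitchfork\Sigma$, in which case $a\inv(\Sigma)=a\inv(\Sigma^1)$ is finite and, by \cite[Theorem 2.3]{krzyznowel}, every $(\beta,x)\in\widetilde{a}\inv(0)$ is a regular point of $\widetilde{a}$, so each local degree $\deg_2(\widetilde{a},(\beta,x))$ equals $1$.

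With transversality in force the computation collapses. By the definition of the intersection number modulo $2$, $\operatorname{I}_2(a,\Sigma)=\#a\inv(\Sigma)\bmod 2$. On the right side each $x\in a\inv(\Sigma^1)$ yields precisely one chosen pair representative $(\beta,x)$, and since that point is regular, $\deg_2(\widetilde{a},(\beta,x))=1$; hence $\sum_{(\beta,x)}\deg_2(\widetilde{a},(\beta,x))=\#a\inv(\Sigma^1)=\#a\inv(\Sigma)$, and reducing mod $2$ gives the desired equality. Here the pairing $(\beta,x),(-\beta,x)$ is what makes ``running through half of the zeros'' well defined, and the antipodal symmetry $\widetilde{a}(-\beta,x)=-\widetilde{a}(\beta,x)$ guarantees $\deg_2(\widetilde{a},(\beta,x))=\deg_2(\widetilde{a},(-\beta,x))$, so the choice of representative is irrelevant mod $2$.

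I expect the finiteness equivalence to be the only genuinely substantive step; the degree identity itself is nearly immediate once transversality is available because every local degree mod $2$ is $1$. The main subtlety to handle with care is the general (non-transversal) case, where $a\inv(\Sigma)$ need not be finite and the stated formula is asserted only under the finiteness hypothesis — so I would be careful to prove the identity for transversal $a$ and then invoke homotopy invariance of both sides rather than attempting to manipulate the possibly-infinite or higher-corank fibers directly.
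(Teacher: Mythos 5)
Your finiteness equivalence and your transversal-case computation are both correct and coincide with the paper's argument: under $a\pitchfork\Sigma$ every point of $\widetilde{a}\inv(0)$ is regular, each mod $2$ local degree equals $1$, and the half-sum is $\#a\inv(\Sigma^1)=\operatorname{I}_2(a,\Sigma)\bmod 2$. The genuine gap is in the step you single out as the delicate one: the reduction of the general finite case to the transversal case. You claim the right-hand side is invariant under the homotopy from $a$ to a transversal $a'$ because $\deg_2(\widetilde{a}|S^{k-1}\times\partial M)$ is a homotopy invariant and ``equals the sum of local degrees.'' But the boundary degree equals the sum of mod $2$ local degrees over \emph{all} zeros of $\widetilde{a}$, and since $\deg_2(\widetilde{a},(\beta,x))=\deg_2(\widetilde{a},(-\beta,x))$, that full sum is twice the half-sum, hence identically $0\bmod 2$. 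Indeed the paper notes immediately after the theorem that for $n-k$ odd this boundary degree is \emph{always} $0$. So the invariant you invoke carries no information about the half-sum and cannot justify replacing $a$ by $a'$; this is exactly where the odd case differs from the even case of Theorem \ref{index_even}, where the integer boundary degree equals $2\Lambda$ and the argument does work. Importing that reasoning mod $2$ makes it collapse.

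The reduction can be repaired, but with a different idea: local stability of the local degree under small perturbation (stated in the Preliminaries), applied compatibly with the antipodal pairing. Around each $x\in a\inv(\Sigma^1)$ choose disjoint compact product neighbourhoods $V_+\times U\ni(\beta,x)$ and $(-V_+)\times U\ni(-\beta,x)$ whose boundaries avoid $\widetilde{a}\inv(0)$, and note $\widetilde{a}$ is bounded away from $0$ outside their union. Take $a'$ transversal to $\Sigma$ and close enough to $a$ that $\widetilde{a'}$ is nonzero outside these neighbourhoods; then $\deg_2(\widetilde{a},(\beta,x))$ equals the sum of the local degrees of $\widetilde{a'}$ at its zeros inside $V_+\times U$, and the antipodal map $(\beta,x)\mapsto(-\beta,x)$ matches the zeros of $\widetilde{a'}$ in $V_+\times U$ bijectively with those in $(-V_+)\times U$. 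Choosing all representatives of the antipodal pairs for $a'$ from the $V_+$ sides shows that the half-sum for $a$ equals the half-sum for $a'$, while $\operatorname{I}_2(a,\Sigma)=\operatorname{I}_2(a',\Sigma)$ holds by definition of the intersection number. With this substitution (small perturbation plus pairing, rather than the boundary degree) your proof becomes complete and then follows the same route as the paper's.
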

 
\begin{proof}
It is sufficient to show the conclusion for  $a\pitchfork \Sigma$. Note that then $0$ is a regular value of $\widetilde{a}$ (see \cite{krzyznowel}).

Take  $a\pitchfork \Sigma$, then $\operatorname{I}_2(a,\Sigma)=\# a\inv(\Sigma^1)=\frac12 \# \widetilde{a}\inv (0) \mod 2$.
Since $0$ is a regular value of $\widetilde{a}$, $\deg_2(\widetilde{a},(\beta, x))=1$ for each  $(\beta, x)\in \widetilde{a} \inv (0)$. Hence $\operatorname{I}_2(a,\Sigma)=\sum _{(\beta, x)}\deg_2(\widetilde{a},(\beta, x))\mod 2$, where $(\beta, x)$  runs through half of the zeros of $\widetilde{a}$.
\end{proof}
Note that $\partial (S^{k-1}\times M)=S^{k-1}\times \partial M$,  $\widetilde{a}|_{S^{k-1}\times \partial M}:S^{k-1}\times \partial M\longrightarrow\R^n\setminus\{0\}$ and the modulo $2$ topological degree of $\widetilde{a}|_{S^{k-1}\times \partial M}$ is well defined, but for $n-k$ odd it is always equal to $0$, see \cite{krzyzszafran}.

\begin{rem}
If $n-k$ is even and $M$ is non--orientable, then $\operatorname{I}_2(a,\Sigma)$ is defined, and Theorem \ref{sumastopni} still holds true. 
\end{rem}

In \cite{krzyznowel} the authors defined a homotopy invariant $\Lambda$ associated with a map from an $(n-k)$--dimensional sphere $S^{n-k}$ into the Stiefel manifold $\widetilde{V}_k(\R^n)$, $n-k$ odd. This $\Lambda$ induces an isomorphism between $(n-k)$--th homotopy group of $\widetilde{V}_k(\R^n)$ and $\Z_2$.

Now let us assume that $M=B^{n-k+1}$, the $(n-k+1)$--dimensional ball. Then $a(S^{n-k})\cap \Sigma=\emptyset$, and $\Lambda (a|S^{n-k})$ is well defined. According to Theorem \ref{sumastopni} and \cite[Section 2.]{krzyznowel}, we get

\begin{cor}
\[
\Lambda (a|S^{n-k})=\operatorname{I}_2(a,\Sigma). 
\]
\end{cor}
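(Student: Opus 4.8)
The plan is to combine Theorem~\ref{sumastopni} with the definition and key property of the invariant $\Lambda$ from \cite{krzyznowel}. First I would note that the hypotheses of the Corollary—$M=B^{n-k+1}$ with $n-k$ odd—put us exactly in the setting where both sides are defined: the ball is a compact $(n-k+1)$--dimensional manifold with boundary $\partial B^{n-k+1}=S^{n-k}$, and the assumption $a(\partial M)\cap\Sigma=\emptyset$ becomes $a(S^{n-k})\cap\Sigma=\emptyset$, so that $a|S^{n-k}$ maps into $\widetilde{V}_k(\R^n)$ and $\Lambda(a|S^{n-k})$ is well defined. The left-hand side $\operatorname{I}_2(a,\Sigma)$ is then defined by Theorem~\ref{index} (in its boundary version) as described at the end of Section~\ref{indeks}.

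The heart of the argument is to recognize that both invariants are computed through the same auxiliary object, namely the zero set $\widetilde{a}\inv(0)$ of the associated map $\widetilde{a}\colon S^{k-1}\times M\to\R^n$. By Theorem~\ref{sumastopni}, after perturbing $a$ to be transversal to $\Sigma$ (which I may do since both $\operatorname{I}_2$ and $\Lambda$ are invariants of the appropriate boundary-preserving homotopy), the intersection number equals $\frac12\#\widetilde{a}\inv(0)\bmod 2$, i.e. the number mod~$2$ of zeros of $\widetilde{a}$ counted up to the antipodal identification $(\beta,x)\sim(-\beta,x)$. The next step is to invoke the definition of $\Lambda$ from \cite[Section~2]{krzyznowel}: for $n-k$ odd, $\Lambda(a|S^{n-k})$ is precisely this same count mod~$2$ of zeros of $\widetilde{a}$ (equivalently, the value in $\Z_2$ assigned to the homotopy class of $a|S^{n-k}$ in $\pi_{n-k}(\widetilde{V}_k(\R^n))\cong\Z_2$). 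Matching the two descriptions of the same quantity yields the equality.

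The main obstacle is the bookkeeping of ensuring that the count of zeros that \cite{krzyznowel} uses to define $\Lambda(a|S^{n-k})$ is genuinely the same count that Theorem~\ref{sumastopni} produces, with the same antipodal identification and the same transversality normalization. Concretely, I would need to check that the construction in \cite{krzyznowel} extends $a|S^{n-k}$ over the ball $B^{n-k+1}$ (or uses an equivalent filling) and reads off the invariant from the interior zeros of the extended $\widetilde{a}$, which is exactly how $\widetilde{a}\inv(0)\subset S^{k-1}\times(M\setminus\partial M)$ enters Theorem~\ref{sumastopni}. Once this identification of conventions is in place, the equality $\Lambda(a|S^{n-k})=\operatorname{I}_2(a,\Sigma)$ is immediate; no further geometric computation is required, since the transversality perturbation and the invariance statements do all the work. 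I would therefore keep the proof short, citing \cite[Section~2]{krzyznowel} for the definition of $\Lambda$ and Theorem~\ref{sumastopni} for the expression of $\operatorname{I}_2(a,\Sigma)$ in terms of $\widetilde{a}\inv(0)$, and concluding that the two agree.
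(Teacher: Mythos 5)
Your proposal is correct and follows essentially the same route as the paper: the paper's entire justification is the one-line remark that the corollary follows from Theorem~\ref{sumastopni} together with the definition of $\Lambda$ in \cite[Section 2]{krzyznowel}, which is exactly your matching of the two counts of zeros of $\widetilde{a}$ (half of $\widetilde{a}\inv(0)$, with the antipodal identification) after a boundary-preserving transversality perturbation. Your only slip is cosmetic --- you call $\operatorname{I}_2(a,\Sigma)$ the left-hand side when it is the right-hand side of the stated equality.
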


\section{Counting the intersection number mod 2 for polynomial mappings} \label{counting}

In this Section we present the method to compute effectively $\operatorname{I}_2(a,\Sigma)$ in a polynomial case.

We showed that $\operatorname{I}(a,\Sigma)=\Lambda(a|\partial M)$, so in a polynomial case, according to \cite[Theorem 3.3.]{krzyzszafran}, with some additional assumptions,  $\operatorname{I}(a,\Sigma)$ can be expressed as the half of a sum of signatures of quadratic forms that can be constructed explicitly. This gives us an effective method to compute $\operatorname{I}(a,\Sigma)$.

\bigskip

Take polynomial mappings $h=(h_1,\ldots ,h_l):\R^{n-k+1+l}\longrightarrow\R^{l}$ and $g:\R^{n-k+1+l}\longrightarrow\R$, $n-k>0$ odd. Let us assume that $h\inv(0)$ is a complete intersection (i.e. $h^{-1}(0)\neq \emptyset$ and $\rank dh(x)=l$ for $x\in h^{-1}(0)$) and $M=h^{-1}(0)\cap \{g\geqslant 0\}$. Take a polynomial mapping $a:M\longrightarrow M_k(\R^n)$, such that $a(\partial M)\cap \Sigma=\emptyset$. As in \cite[Lemma 3.2]{szafraniec1} for an isolated zero $(\beta,x) \in\widetilde{a}\inv(0)$ we have $$\deg(\widetilde{a},(\beta,x))=\deg((h,\widetilde{a}),(\beta,x)),$$ where by $(h,\widetilde{a})$ we mean a mapping from $S^{k-1}\times \R^{n-k+1+l}$ to $\R^l\times \R^{n}$,
and so the same equality holds for modulo $2$ local topological degrees.

Let us take an ideal $J$ in $\R[x]=\R[x_1,\ldots,x_{n-k+1+l}]$ generated by $h_1,\ldots ,h_l$ and all $k\times k$ minors of matrix $a(x)$, and let the ideal $J'$ be generated by $h_1,\ldots ,h_l$ and all $(k-1)\times (k-1)$ minors of $a(x)$. Put $\Apis=\R[x]/J$.% $V(I)=a\inv(\Sigma)=a\inv(\Sigma^1)$. 

From now on we will assume that $\dim_{\R}\Apis<\infty$, $J+J'=\R[x]$, $J+\langle g\rangle=\R[x]$. From the first assumption we get that the zero set $V(J)$ of $J$ is finite, from the second one --- that $a\inv(\Sigma)=a\inv(\Sigma^1)$, and so $\widetilde{a}\inv(0)$ and $a\inv (\Sigma)$ are finite sets. The last assumption implicates that $a(\partial M)\cap \Sigma=\emptyset$.

Denote by $\Opis_x^n$  the ring of germs at $x\in \R^n$ of analytic functions $\R^{n}\longrightarrow \R$, by  $\Opis_x^{S}$ the ring of germs at $x\in S$ of analytic functions $S\longrightarrow \R$.

The following Proposition contains some results from \cite[Section 3]{krzyzszafran}, adapted to our case. 

\begin{prop}\label{isomorphism}
Take $p\in V(J)$ and $(\beta,p)\in\widetilde{a}\inv(0)$. One can construct a polynomial mapping $F=(F_1,\ldots ,F_n):\R^{k-1}\times \R^{n-k+1+l}\longrightarrow\R^n$ such that there is such $\lambda\in\R^{k-1}$ that $(\lambda,p)$ is an  isolated zero of $(h,F)$ with following properties: 
\begin{itemize}
\item $\deg_2(\widetilde{a},{(\beta,p)})=\deg_2((h,F),{(\lambda,p)})$,
\item $\Opis_p^{n-k+1+l}/J\simeq\Opis_{(\lambda,p)}^{n+l}/\langle h,F \rangle$.
\end{itemize}
\end{prop}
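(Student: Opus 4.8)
The plan is to reduce the mapping $\widetilde{a}$, which is defined on the sphere $S^{k-1}$, to a polynomial mapping on Euclidean space by trivializing a suitable coordinate chart around $\beta\in S^{k-1}$, and then to identify the two local algebras through this reduction. First I would use the fact that $(\beta,p)\in\widetilde{a}\inv(0)$ comes in a pair with $(-\beta,p)$, and that we may rotate coordinates on $S^{k-1}$ so that $\beta=(1,0,\ldots,0)$ is the north pole. Near this pole the sphere is an $(k-1)$--dimensional graph: I would parametrize $S^{k-1}$ locally by $\R^{k-1}\ni(\beta_2,\ldots,\beta_k)\mapsto(\sqrt{1-\beta_2^2-\cdots-\beta_k^2},\beta_2,\ldots,\beta_k)$, which maps $0\mapsto\beta$. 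Composing $\widetilde{a}$ with this chart yields an analytic mapping $\R^{k-1}\times\R^{n-k+1+l}\to\R^n$. The mapping $F$ is then obtained by replacing the transcendental square--root term by a polynomial expression (as in \cite[Section 3]{krzyzszafran}), for instance by clearing the constraint $\beta_1^2+\cdots+\beta_k^2=1$ and absorbing it, so that $F$ is genuinely polynomial while having the same germ of zero set and the same local structure at $(\lambda,p)$, where $\lambda$ corresponds to the chart--coordinate of $\beta$.

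The first bullet, the equality of the modulo $2$ local degrees $\deg_2(\widetilde{a},(\beta,p))=\deg_2((h,F),(\lambda,p))$, should follow because the chart is a local diffeomorphism fixing the relevant point and the polynomial replacement does not alter the germ of the zero set; composing with a local diffeomorphism preserves the modulo $2$ local degree. Here I would invoke the device already cited in the excerpt: just as $\deg(\widetilde{a},(\beta,x))=\deg((h,\widetilde{a}),(\beta,x))$ by appending the defining equations $h$ and the sphere equation \`a la \cite[Lemma 3.2]{szafraniec1}, the passage to $(h,F)$ on $\R^{k-1}\times\R^{n-k+1+l}$ records the same intersection data. The key point is that isolatedness of $(\lambda,p)$ as a zero of $(h,F)$ transfers from isolatedness of $(\beta,p)$ in $\widetilde{a}\inv(0)$, which holds because $p\in V(J)$ is isolated under the standing assumption $\dim_{\R}\Apis<\infty$.

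For the second bullet, the algebra isomorphism $\Opis_p^{n-k+1+l}/J\simeq\Opis_{(\lambda,p)}^{n+l}/\langle h,F\rangle$, the plan is to show that both quotients compute the same local multiplicity and in fact have the same defining germ once one accounts for the extra $k-1$ sphere--variables. The generators of $J$ beyond the $h_i$ are the $k\times k$ minors of $a(x)$, which vanish exactly when $a(x)\in\Sigma$, i.e. when $\widetilde{a}(\beta,x)=0$ has a solution; eliminating the sphere--variables $\beta$ via the components $F_1,\ldots,F_n$ should recover precisely the minors ideal locally. I would make this precise by a local elimination/Nullstellensatz argument: the $k-1$ equations among $F$ that are solvable for the $\beta$--directions allow one to eliminate those variables, and the residual equations generate $J$ in $\Opis_p^{n-k+1+l}$. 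This matching of ideals after elimination is exactly the content adapted from \cite[Section 3]{krzyzszafran}.

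The main obstacle I anticipate is the second bullet, specifically verifying that elimination of the sphere--coordinates $\beta$ from $\langle h,F\rangle$ reproduces the minors ideal $J$ rather than merely an ideal with the same radical. A degree/dimension count giving equal vector--space dimensions of the two finite--dimensional quotients is not by itself enough; one must check that the elimination is a genuine local isomorphism of algebras, which requires that the partial derivatives of $F$ in the $\beta$--directions form an invertible block at $(\lambda,p)$ — equivalently, that $\det D>0$ (the nondegenerate corner block in the normal form used in Theorem \ref{index_even}) guarantees solvability of those $\beta$--equations by the implicit function theorem. Once this invertibility is established, the isomorphism follows from the standard fact that eliminating variables cut out by equations with invertible Jacobian block induces an isomorphism of local analytic algebras, and I would lean on the corresponding lemmas of \cite{krzyzszafran} to close this step.
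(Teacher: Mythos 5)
Your proposal is correct and follows essentially the same route as the paper's proof: $F$ is the polynomial de--homogenization of $\widetilde{a}$ in a chart of $S^{k-1}$ around $\beta$ constructed as in \cite[Section 3]{krzyzszafran}, the first bullet comes from chart--invariance of the local degree combined with \cite[Lemma 3.2]{szafraniec1}, and the second bullet comes from solving $k-1$ of the equations $F_i=0$ for the chart variables by the implicit function theorem and then invoking the ideal equality from \cite{krzyzszafran} (this is exactly the paper's $\Gamma$, $\Omega$ and its appeal to \cite[Lemma 3.7]{krzyzszafran} to get $J=\langle h,\Omega\rangle$, not merely an ideal with the same radical). The only cosmetic difference is your square--root graph chart: since $\widetilde{a}$ is linear in $\beta$, the standard construction uses the central projection $\lambda\mapsto(1,\lambda)/|(1,\lambda)|$, so that $F(\lambda,x)=a_1(x)+\lambda_2a_2(x)+\ldots+\lambda_ka_k(x)$ is automatically polynomial and no ``replacement'' of the transcendental term is needed.
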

\begin{proof} 
Take $p\in V(J)$ and $(\beta,p)\in\widetilde{a}\inv(0)$. Analogically as in \cite{krzyzszafran}, one can construct a polynomial mapping $F$ such that $\deg_2(\widetilde{a},{(\beta,p)})=\deg_2(F|(\R^{k-1}\times h\inv(0)),{(\lambda,p)})$, for some $\lambda$. Then by applying \cite[Lemma 3.2]{szafraniec1} we get $\deg_2(F|(\R^{k-1}\times h\inv(0)),{(\lambda,p)})=\deg_2((h,F),{(\lambda,p)})$.

Moreover in the neighbourhood of $p$ 
there is $\lambda(x)=(\lambda_2(x),\ldots ,\lambda_k(x))$ such that $F_{i_1}(\lambda(x),x)=\ldots =F_{i_{k-1}}(\lambda(x), x)=0$, $1\leqslant i_1<\ldots <i_{k-1}\leqslant n$. 
As in \cite{krzyzszafran} $\Gamma=\{(\lambda(x),x)|\ x\in h\inv(0)\}$ is an $(n-k+1)$--dimensional manifold, and we put $\Omega(x)=(\Omega_1(x),\ldots , \Omega_n(x))=F(\lambda(x),x)$.

It is easy to see that $$\Opis^{n+l}_{(\lambda,p)}/\langle h, F\rangle\simeq \Opis^{\Gamma}_{(\lambda,p)}/\langle  F\rangle\simeq \Opis^{h\inv(0)}_{p}/\langle \Omega\rangle\simeq \Opis^{n-k+1+l}_{p}/\langle h, \Omega\rangle.$$

From \cite[Lemma 3.7]{krzyzszafran} we get that $J=\langle h, \Omega\rangle$ in $\Opis^{n-k+1+l}_p$, and then $$\Opis^{n+l}_{(\lambda,p)}/\langle h, F\rangle\simeq \Opis^{n-k+1+l}_p/J.$$
\end{proof}

\begin{theorem} \label{formula}
Take polynomial mappings $h=(h_1,\ldots h_l):\R^{n-k+1+l}\longrightarrow\R^{l}$ and $g:\R^{n-k+1+l}\longrightarrow\R$, $n-k>0$ odd, such that $h\inv(0)$ is a complete intersection and $M=h^{-1}(0)\cap \{g\geqslant 0\}$ is compact. For a polynomial $a:M\longrightarrow M_k(\R^n)$, let us assume that $\dim_{\R}\Apis<\infty$, $J+J'=\R[x]$, $J+\langle g\rangle=\R[x]$. Then for any linear functional $\varphi \colon \Apis
\longrightarrow \R$ and  $\Phi$, $\Psi$ -- the bilinear
symmetric forms on $\Apis$ given by $\Phi(f_1,f_2)=\varphi(f_1f_2)$,
$\Psi(f_1,f_2)=\varphi(g f_1f_2)$ such that $\det [\Psi]\neq 0$, we have  $\det [\Phi]\neq 0$
and
$$
\operatorname{I}_2(a,\Sigma)=\dim_{\R} \Apis +1+\frac{1}{2}(\sgn \det [\Phi]+\sgn \det[\Psi])\mod 2,
$$
where $[\Phi]$ denotes the matrix of the form $\Phi$.
\end{theorem}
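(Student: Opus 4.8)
The plan is to reduce the computation of $\operatorname{I}_2(a,\Sigma)$ to a signature computation on the finite--dimensional algebra $\Apis=\R[x]/J$, and then to rewrite that signature information in terms of signs of determinants. First I would apply Theorem~\ref{sumastopni}: since $\dim_\R\Apis<\infty$ and $J+J'=\R[x]$, the set $\widetilde a\inv(0)$ is finite, $a\inv(\Sigma)=a\inv(\Sigma^1)$, and
\[
\operatorname{I}_2(a,\Sigma)=\sum_{(\beta,p)}\deg_2(\widetilde a,(\beta,p))\bmod 2,
\]
where $(\beta,p)$ runs through one representative of each pair in $\widetilde a\inv(0)$. Because $\widetilde a$ lives over $M=h\inv(0)\cap\{g\geqslant0\}$ and $J+\langle g\rangle=\R[x]$ forces $g\neq0$ on $V(J)$, these representatives are exactly the points $p\in V(J)$ with $g(p)>0$, each contributing a single pair (the corank is $1$ on $V(J)$). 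Then Proposition~\ref{isomorphism} replaces each local degree by $\deg_2((h,F),(\lambda,p))$ and supplies $\Opis^{n+l}_{(\lambda,p)}/\langle h,F\rangle\simeq\Opis^{n-k+1+l}_p/J$. By the Eisenbud--Levine--Khimshiashvili theorem the integer degree of $(h,F)$ at $(\lambda,p)$ is the signature of a nondegenerate form on that local algebra, hence congruent mod $2$ to its rank $\dim_\R\bigl(\Opis^{n-k+1+l}_p/J\bigr)$. Writing $\Apis_p$ for the local factor of $\Apis$ at $p$ and $R_+=\{p\in V(J):g(p)>0\}$, this gives
\[
\operatorname{I}_2(a,\Sigma)\equiv\sum_{p\in R_+}\dim_\R\Apis_p\pmod 2.
\]

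Next I would analyse the forms $\Phi,\Psi$ through the decomposition $\Apis\simeq\prod_p\Apis_p$, with $p$ ranging over $V_\C(J)$ (real points giving real factors, non--real points grouped into conjugate pairs). For $f_1\in\Apis_p$, $f_2\in\Apis_{p'}$ with $p\neq p'$ one has $f_1f_2=0$, so this decomposition is orthogonal for both $\Phi$ and $\Psi$, whence $\det[\Psi]=\prod_p\det[\Psi|_{\Apis_p}]$ and likewise for $\Phi$. On a real factor $\Apis_p$ the element $g$ equals $g(p)$ plus a nilpotent, so $g/g(p)$ is the square of a unit $u$, giving $\Psi|_{\Apis_p}(f_1,f_2)=g(p)\,\Phi|_{\Apis_p}(uf_1,uf_2)$, i.e. $\Psi|_{\Apis_p}\cong g(p)\,\Phi|_{\Apis_p}$; the same scaling applies on conjugate--pair factors. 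Thus $\det[\Psi]\neq0$ forces $\det[\Phi]\neq0$, both of rank $\dim_\R\Apis$. Using $\signature(\Psi|_{\Apis_p})=\sgn(g(p))\,\signature(\Phi|_{\Apis_p})$ on real factors, the vanishing of the signature on a conjugate--pair factor (a standard fact in the theory of \cite{becker,pedersenetal}), and $\signature(\Phi|_{\Apis_p})\equiv\dim_\R\Apis_p\pmod 2$ (signature $\equiv$ rank), I obtain
\[
\tfrac12\bigl(\signature\Phi+\signature\Psi\bigr)=\sum_{p\in R_+}\signature(\Phi|_{\Apis_p})\equiv\sum_{p\in R_+}\dim_\R\Apis_p\equiv\operatorname{I}_2(a,\Sigma)\pmod 2.
\]

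Finally I would pass from signatures to signs of determinants by a purely formal identity valid for any two nondegenerate symmetric forms of the same rank $d=\dim_\R\Apis$. Writing $n_\Phi,n_\Psi$ for the numbers of negative eigenvalues, we have $\signature\Phi=d-2n_\Phi$ and $\sgn\det[\Phi]=(-1)^{n_\Phi}$ (similarly for $\Psi$), so $\tfrac12(\signature\Phi+\signature\Psi)=d-n_\Phi-n_\Psi\equiv d+n_\Phi+n_\Psi\pmod 2$. On the other hand $\tfrac12\bigl((-1)^{n_\Phi}+(-1)^{n_\Psi}\bigr)\equiv1+n_\Phi+n_\Psi\pmod 2$ (checking the three parity cases), whence $d+1+\tfrac12(\sgn\det[\Phi]+\sgn\det[\Psi])\equiv d+n_\Phi+n_\Psi\pmod2$. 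Combining these two congruences yields the asserted formula.

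The main obstacle is the middle paragraph: justifying that the local--global decomposition is orthogonal for both forms and, above all, that conjugate--pair factors contribute $0$ to each signature for an \emph{arbitrary} functional $\varphi$, not merely the trace. I expect to treat the latter by complexifying a conjugate--pair factor $\Apis_p$ into $B\times\bar B$ and exhibiting the induced real form as hyperbolic, or by quoting the signature--counting results of \cite{becker,pedersenetal} directly. Everything else --- the reduction via Theorem~\ref{sumastopni} and Proposition~\ref{isomorphism}, and the closing arithmetic --- is routine once these structural facts are established.
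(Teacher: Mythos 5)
Your proof is correct, and its skeleton agrees with the paper's up to the main reduction: both arguments use Theorem \ref{sumastopni} and Proposition \ref{isomorphism} to rewrite $\operatorname{I}_2(a,\Sigma)$ as $\sum\deg_2((h,F),(\lambda_i,p_i))$, summed over the real points $p_i\in V(J)$ with $g(p_i)>0$, together with the identification of local algebras. The difference lies in how that sum is converted into the determinant--sign formula. The paper does this in one stroke by quoting Theorem 2.3 of \cite{szafraniec1} (after observing that $\Apis$ differs from $\oplus_i\,\Opis^{n+l}_{(\lambda_i,p_i)}/\langle h,F\rangle$ only by an even--dimensional summand $\mathcal{D}$), whereas you re-derive the content of that cited theorem from scratch: Eisenbud--Levine to replace each local degree mod $2$ by $\dim_\R\Apis_{p_i}$, the orthogonal splitting of $\Phi$ and $\Psi$ over the factors of $\Apis$ indexed by $V_\C(J)$, the unit--square trick $g=g(p)u^2$ giving $\Psi|_{\Apis_p}\cong g(p)\,\Phi|_{\Apis_p}$ (hence also $\det[\Psi]\neq0\Rightarrow\det[\Phi]\neq0$), vanishing of signatures on conjugate--pair factors, and the closing parity identity. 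All of these steps are sound; in particular the point you flag as the main obstacle does go through for an \emph{arbitrary} functional $\varphi$: identifying a conjugate--pair factor with the local $\C$--algebra $B_q$ regarded as a real algebra, the quadratic form $Q(f)=\varphi(f^2)$ satisfies $Q(if)=-Q(f)$, so its positive and negative indices coincide and its signature is $0$, exactly as you predict. What the paper's route buys is brevity, since the cited theorem is tailored precisely to the map $(h,F)$ and the pair of forms $(\Phi,\Psi)$. What your route buys is self--containedness and transparency: it exhibits the mechanism by which the two determinant signs count real points of $V(J)$ in $\{g>0\}$, and it proves the formula directly for the forms on $\Apis$ itself, rather than transporting a statement about the algebra of $(h,F)$ through the isomorphism of Proposition \ref{isomorphism}, a passage the paper handles rather tersely.
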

\begin{proof}
Let $V(J)=\{p_1, \ldots ,p_m\}$. There is an even--dimensional algebra $\mathcal{D} $ such that the natural projection 
\[
 {\mathcal A}\to \oplus_{i=1}^m {\mathcal O}_{p_i}^{n-k+1+l}/J \oplus \mathcal{D} 
\]
is an isomorphism of $\R$--algebras (see e.g. \cite[Section 1.]{szafraniec1}), and so
$\dim_{\R}{\mathcal A}= \sum\limits_{i} \dim_{\R}{\mathcal O}_{p_i}^{n-k+1+l}/J \mod 2$. 

By Proposition \ref{isomorphism} we have an isomorphism
\[
\oplus_{i=1}^m {\mathcal O}_{(\lambda_i,p_i)}^{n+l}/\langle h,F\rangle \oplus \mathcal{D} \simeq \mathcal{A}.
\]

According to \cite[Theorem 2.3.]{szafraniec1} we get

$$\sum_{i:g(p_i)>0}\deg_2((h,F),(\lambda_i,p_i))=$$ $$=\dim_{\R} \left(\oplus_{i=1}^m {\mathcal O}_{(\lambda_i,p_i)}^{n+l}/\langle h,F\rangle \right)+1+\frac{1}{2}(\sgn \det [\Phi]+\sgn \det[\Psi])\mod 2 = $$
$$=\dim_{\R} \mathcal{A}+1+\frac{1}{2}(\sgn \det [\Phi]+\sgn \det[\Psi])\mod 2.$$

Since by Propositions \ref{sumastopni}, \ref{isomorphism} $\operatorname{I}_2 (a,\Sigma)=\sum_{i:g(p_i)>0}\deg_2((h,F),(\lambda_i,p_i))$, we get the conclusion.
\end{proof}

\begin{rem}
If we take $g=1$ then $M=h\inv(0)$ is a manifold without boundary, $\Phi=\Psi$, and we get that  $\operatorname{I}_2(a,\Sigma)=\dim_{\R} \Apis =0\mod 2$.
\end{rem}

\begin{rem}
One can also use Theorem \ref{formula} as a simple way to compute $\operatorname{I}(a,\Sigma)$ (and so the invariant from \cite{krzyzszafran}) modulo $2$ in the case where $n-k$ is even (because $\operatorname{I}(a,\Sigma)\mod 2 =\operatorname{I}_2(a,\Sigma)$). 
\end{rem}

Using \textsc{Singular} (\cite{singular}) and previous Theorem we present the following examples.

\begin{ex}
Let $M$ be a half of a $2$--dimensional sphere ($g(x,y,z)=-z$, $h(x,y,z)=x^2+y^2+z^2-1\colon \R^3\to \R$). Take $a,b\colon M\to M_2(\R^3)$ as
\[a(x)=\left [
\begin{matrix}
10x^2z+4xy+10x & 2x^2y+7x^2+6y \\
6xz^2+4y^2+7y & 3z^3+10z^2+9z  \\
4z^3+2z^2+7x & 7xz^2+7xy+2y
\end{matrix}\right ],
\]
\[b(x)=\left [
\begin{matrix}
8x^2y+8z^2+x & 5xyz+2yz+x \\
9xy^2+6z^2+6x & x^2y+z^2+3y \\
2y^3+5y^2+6y & 2x^3+7xy+6y
\end{matrix}\right ].
\]
Then for $a$ the dimension of the algebra ${\cal A}$ equals $48$, $\operatorname{I}_2(a,\Sigma)=0 \mod 2$, and for $b$ the dimension of the algebra ${\cal A}$ equals $50$, $\operatorname{I}_2(b,\Sigma)=1  \mod 2$. 

So $a$ and $b$ are not homotopic by a homotopy that takes $\partial M$ into $M_2(\R^3)\setminus \Sigma$.
\end{ex}

\begin{ex}
Let $M$ be a half of a $2$--dimensional torus ($g(x,y,z,w)=z$, $h(x,y,z,w)=(1-x^2-w^2, 1-y^2-z^2)\colon \R^4\to \R^2$). Take $a\colon M\to M_2(\R^3)$ as
\[a(x)=\left [
\begin{matrix}
w^3x+7w^2x+8y^2+8z & y^4+7xz^2+6z^2+3x \\
xyz^2+5wxy+10wx+4y & x^2y^2+8wy^2+2wy+9w \\
2x^2y^2+7x^3+z^2+10y & 5w^4+8wxy+6w^2+8y
\end{matrix}\right ].
\]
Then the dimension of the algebra ${\cal A}$ equals $184$, and
$\operatorname{I}_2(a,\Sigma)=1 \mod 2$. 
\end{ex}

\section{Applications. Counting the number of cross--caps modulo $2$ and the algebraic sum of them} \label{applications}

Mappings from an $m$--dimensional manifold $M$ into $\R^{2m-1}$ are natural object of study. In \cite{whitney1,whitney2}, Whitney described typical mappings from $M$ into $\R^{2m-1}$. Those mappings have only isolated critical points, called cross--caps. Whitney showed that the algebraic sum of cross--caps ($m$ odd) or the number of cross--caps modulo $2$ ($m$ even) is an important invariant of such mappings. This invariant was investigated in various situations for example in \cite{ikegamisaeki, krzyz, krzyznowel}.

We will show a new method to check whether a point is a cross--cap singularity. In a polynomial case it gives a method of verifying effectively whether a mapping $f$ has only cross--caps as singular points. Moreover, we show, that the algebraic sum of cross--caps or the number of cross--caps modulo $2$ can be expressed as the intersection number of some mapping associated to $f$, and so can be effectively counted using the results of previous sections. 

\bigskip

At the beginning of this section we will present three technical lemmas and then we switch to the applications.

\begin{lemma} \label{tech1}
Let $U\subset \R^{n-k+1}$ be an open set, $n-k>0$, $a\colon U\to M_k(\R^n)$. Take $x\in U$ such that $a(x)\in \Sigma ^1$. Let us assume that $a(x)$ has the following form: 
\[
a_1(x)=(0,\ldots,0) \mbox{ and }
a_i(x)=(0,\ldots,0,a_i^{n-k+2}(x),\ldots,a_i^{n}(x))
\]
for $i=2,\ldots,k$ (here $a_i^j$ is the element standing in the $j$--th row and $i$--th column).
Then $a \pitchfork \Sigma ^1$ at $x$ if and only if
\[
\rank \left [\frac{\partial (a_1^1,\ldots ,a_1^{n-k+1})}{\partial (x_1,\ldots
,x_{n-k+1})}(x)\right ]=n-k+1. 
\]
\end{lemma}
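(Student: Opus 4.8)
The plan is to exploit the local description of $\Sigma^1$ as the zero set of a submersion, exactly as in the proof of Theorem \ref{index_even}. First I would observe that since $a(x)\in\Sigma^1$ has corank exactly $1$ and, by hypothesis, every entry of $a(x)$ outside the bottom--right $(k-1)\times(k-1)$ block $D$ vanishes, the rank of $a(x)$ equals $\rank D$; hence $\det D\neq 0$. Writing matrices near $a(x)$ in the block form (\ref{matrix_form}), the condition $\det D\neq 0$ persists on a neighbourhood, and by \cite[Chapter II, Lemma 5.2, Proposition 5.3]{golub} the map $f(S)=A-BD\inv C\in\R^{n-k+1}$ is a submersion whose zero set coincides locally with $\Sigma^1$.

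Next I would invoke the standard fact that, when $\Sigma^1$ is cut out near $a(x)$ as the regular zero set of such an $f$, one has $a\pitchfork\Sigma^1$ at $x$ if and only if $f\circ a$ has $0$ as a regular value at $x$, i.e.\ $d(f\circ a)(x)=df(a(x))\circ da(x)$ is surjective onto $\R^{n-k+1}$. Since $\dim U=n-k+1=\codim\Sigma^1$, surjectivity of this $(n-k+1)\times(n-k+1)$ matrix is the same as saying that it has rank $n-k+1$.

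The heart of the argument is the computation of $d(f\circ a)(x)$. Writing $A(\cdot),B(\cdot),C(\cdot),D(\cdot)$ for the blocks of $a$ as functions on $U$, the product rule gives $d(f\circ a)=dA-(dB)D\inv C-B\,d(D\inv)\,C-BD\inv\,dC$. At the point $x$ the hypotheses force $B(x)=0$ (the first $n-k+1$ rows of columns $2,\dots,k$ vanish) and $A(x)=C(x)=0$ (the whole first column vanishes), so the last three terms all vanish and $d(f\circ a)(x)=dA(x)$. But $A=(a_1^1,\dots,a_1^{n-k+1})$, whence $dA(x)=\left[\frac{\partial(a_1^1,\ldots ,a_1^{n-k+1})}{\partial(x_1,\ldots ,x_{n-k+1})}(x)\right]$. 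Combining with the previous step yields $a\pitchfork\Sigma^1$ at $x$ if and only if this Jacobian has rank $n-k+1$, which is the claim.

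The only delicate point is justifying that the three correction terms drop out: one must check that in each of them $B(x)$ appears as a left factor and $C(x)$ as a right factor, so that their vanishing at $x$ annihilates the whole term irrespective of the value of $d(D\inv)(x)$. This is immediate from the explicit block form of the derivative above, so no further estimates are needed.
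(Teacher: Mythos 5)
Your proof is correct and takes essentially the same route as the paper: both arguments rest on the Golubitsky--Guillemin local equation $f(S)=A-BD^{-1}C$ for $\Sigma^1$ near $a(x)$ (valid since $\det D\neq 0$ there), and both reduce transversality to the condition $\rank\left[\frac{\partial(a_1^1,\ldots,a_1^{n-k+1})}{\partial(x_1,\ldots,x_{n-k+1})}(x)\right]=n-k+1$. The paper organizes the last step by computing $T_{a(x)}\Sigma^1=\ker df(a(x))$ as a coordinate subspace and checking the direct-sum condition on tangent spaces, which is the same linear algebra as your submersion criterion $d(f\circ a)(x)=dA(x)$ being surjective.
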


\begin{proof}
As in the proof of Theorem \ref{index_even} one can show that the tangent space $T_{a(x)}\Sigma ^1$ is spanned by vectors
$v_i=(0,\ldots ,0,\ldots , 1, \ldots ,0)$, where $1$ stands at $(i+n-k+1)$--th place, $i=1,\ldots ,nk-(n-k+1)$.

Let us observe that $a\pitchfork \Sigma ^1$ at $x$ if and only if $\rank
da(x)$ is maximal (i.e. equals $n-k+1$) and $T_{a(x)}\Sigma ^1\cap
da(x)\R^{n-k+1}=\{0\}$. It is equivalent to the condition:
\begin{equation} \label{maxrank}
\rank \left [\frac{\partial (a_1^1,\ldots ,a_1^{n-k+1})}{\partial (x_1,\ldots
,x_{n-k+1})}(x)\right ]=n-k+1.
\end{equation}
\end{proof}

\begin{lemma} \label{tech2}
Let $U\subset \R^{n-k+1}$ be an open set, $n-k>0$, $a\colon U\to M_k(\R^n)$, $b\colon U\to M_k(\R^s)$, $c\colon U\to M_s(\R^{n+s})$. We define $e\colon U\to M_{k+s}(\R^{n+s})$ as 
\[
e(x)= \left [
\begin{array}[c]{c|c}
\vbox{\hbox{b(x)} \hbox{a(x)}} & \vbox{\vfill \hbox{c(x)} \vfill} \\
\end{array}
\right ].
\]
Let us assume that for each $x\in U$ we have $\rank a(x)=\rank \left [
\begin{array}{c}
b(x)\\ 
a(x)
\end{array}
\right ]$ and $s+\rank a(x)= \rank e(x)$.
Then $a\pitchfork \Sigma ^1\subset M_k(\R^n)$ at $x$ if and only if $e\pitchfork \Sigma ^1\subset M_{k+s}(\R^{n+s})$ at $x$.
\end{lemma}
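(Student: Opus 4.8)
The plan is to reduce the transversality condition for $e$ to the one for $a$ by applying Lemma \ref{tech1} to both sides, after bringing $e(x)$ into the normal form required by that lemma. The two rank hypotheses are exactly what is needed to perform this reduction cleanly, so the first task is to understand their geometric content. Since $a\pitchfork\Sigma^1$ presupposes $\rank a(x)=k-1$ (corank $1$), the hypothesis $\rank a(x)=\rank\left[\begin{smallmatrix}b(x)\\ a(x)\end{smallmatrix}\right]$ says that the rows of $b(x)$ lie in the row span of $a(x)$, hence the block $\left[\begin{smallmatrix}b(x)\\ a(x)\end{smallmatrix}\right]$ also has corank $1$ as a map into $M_k(\R^{n+s})$; and $s+\rank a(x)=\rank e(x)$ forces $\rank e(x)=k+s-1$, i.e.\ $e(x)\in\Sigma^1\subset M_{k+s}(\R^{n+s})$ precisely when $a(x)\in\Sigma^1$. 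So at the relevant points both maps land in the corank-$1$ stratum, and the statement makes sense.

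First I would fix $x$ with $a(x)\in\Sigma^1$ and, using invertible constant linear changes of coordinates on the target $\R^{n+s}$ and invertible column operations (which are diffeomorphisms of $M_{k+s}(\R^{n+s})$ preserving $\Sigma^1$, as recorded in Lemma \ref{technical_matrix}), put $e(x)$ into the normal form of Lemma \ref{tech1}: the first column vanishes identically at $x$ and the remaining $k+s-1$ columns have zero entries in the top $(n+s)-(k+s)+1=n-k+1$ rows. The rank hypotheses guarantee this can be arranged so that the $a$-block simultaneously sits in the normal form of Lemma \ref{tech1} inside $M_k(\R^n)$: the extra $s$ columns coming from $c$ and the extra $s$ rows coming from $b$ account for exactly the added corank/dimension, and because these operations are $x$-independent constants at the point $x$ (chosen from the values at $x$) they do not disturb the derivative comparison below. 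The key point is that the top-left $(n-k+1)\times(k-1)$ sub-block relevant to $a$ and the top-left $(n+s-(k+s)+1)\times(k+s-1)=(n-k+1)\times(k+s-1)$ sub-block relevant to $e$ share the same first $(n-k+1)$ rows of the first column.

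Then I would invoke Lemma \ref{tech1} on both sides. For $a$, transversality at $x$ is equivalent to
\[
\rank\left[\frac{\partial(a_1^1,\ldots,a_1^{n-k+1})}{\partial(x_1,\ldots,x_{n-k+1})}(x)\right]=n-k+1,
\]
i.e.\ the Jacobian of the top $n-k+1$ entries of $a$'s first column is full rank. For $e$ in normal form, Lemma \ref{tech1} gives the analogous criterion on the top $n-k+1$ entries of $e$'s first column. The whole point of the normal-form arrangement is that \emph{these are the same $(n-k+1)$ functions}: the first column of $e$ restricted to its top $n-k+1$ rows coincides (up to the fixed invertible change of target coordinates, whose constant Jacobian does not affect the rank) with the first column of $a$ restricted to its top $n-k+1$ rows. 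Hence the two full-rank conditions are literally equivalent, giving $a\pitchfork\Sigma^1$ at $x$ iff $e\pitchfork\Sigma^1$ at $x$.

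The main obstacle I anticipate is bookkeeping: verifying that one \emph{single} change of coordinates on $\R^{n+s}$ together with column operations simultaneously normalizes $e$ (as an element of $M_{k+s}(\R^{n+s})$) and restricts to a valid normalization of the $a$-block (as an element of $M_k(\R^n)$), and that in doing so the distinguished $n-k+1$ rows line up so that the two Jacobian minors in Lemma \ref{tech1} are genuinely the same functions. This is where the two rank hypotheses must be used carefully: $\rank a=\rank\left[\begin{smallmatrix}b\\ a\end{smallmatrix}\right]$ lets me clear the $b$-block by row operations without enlarging the corank, and $s+\rank a=\rank e$ guarantees the $s$ columns of $c$ together with those row operations produce an invertible $s\times s$ corner that can be normalized away, leaving the $a$-data undisturbed in the top rows. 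Once that alignment is established, everything else is an application of the previous two lemmas.
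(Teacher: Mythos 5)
Your proposal is correct and follows essentially the same route as the paper: the paper's proof also uses elementary row and column operations to bring $a$ and $e$ simultaneously into the normal form of Lemma \ref{tech1}, arranged so that the first $n-k+1$ entries of the first columns of the two (transformed) maps coincide, with invertible blocks $D$ and $E$ in the remaining positions, and then concludes by applying Lemma \ref{tech1} to both. Your reading of the two rank hypotheses (clearing the $b$-block by row operations, producing the invertible $s\times s$ corner from $c$) is exactly the bookkeeping the paper leaves implicit.
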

\begin{proof}
Take $x\in U$ such that $a(x)\in \Sigma ^1$. Note that then also $e(x)\in \Sigma ^1$. By elementary column and row operations we may transform mappings $a$ and $e$ to such forms, that the first $n-k+1$ elements of the first column of both of them coincide, and at the point $x$ matrices $a(x)$ and $e(x)$ have the following forms:
\[
a(x)=\left [
\begin{array}{c|c}
0 &  \cr
\vdots & \mathbf{0}_{(n-k+1)\times (k-1)} \cr
0 &  \cr
\hline
0 &  \cr
\vdots & D_{(k-1)\times (k-1)} \cr
0 &  \cr
\end{array}
\right ], \quad
e(x)=\left [
\begin{array}{c|c|c}
0 & & \cr
\vdots & \mathbf{0}_{(n-k+1)\times (k-1)} & \mathbf{0}_{(n-k+1)\times s} \cr
0 & & \cr
\hline
0 & & \cr
\vdots & \mathbf{0}_{s\times (k-1)} & E_{s\times s} \cr
0 & & \cr
\hline
0 & & \cr
\vdots & D_{(k-1)\times (k-1)} & \mathbf{0}_{(k-1)\times s}  \cr
0 & & \cr
\end{array}
\right ],
\] 
where $D$ and $E$ have maximal rank. From the previous Lemma we obtain the conclusion.
\end{proof}

\begin{lemma}\label{tech3}
Let $U\subset \R^{n-k+1}$ be an open set, $n-k>0$, $a\colon U\to M_k(\R^n)$, $b\colon U\to M_k(\R^k)$. We define $ab\colon U\to M_k(\R^{n})$ as $ab(x)=a(x)\cdot b(x)$.
Let us assume that for each $x\in U$ we have $\rank b(x)=k$.
Then $a\pitchfork \Sigma ^1$ at $x$ if and only if $ab\pitchfork \Sigma ^1$ at $x$.

For $\bar{x}$ such that $a\pitchfork \Sigma ^1$ at $\bar{x}$, if $\det b(\bar{x})>0$, then $\operatorname{I}(a,\Sigma)_{\bar{x}}=\operatorname{I}(ab,\Sigma)_{\bar{x}}$, resp. $\operatorname{I}_2(a,\Sigma)_{\bar{x}}=\operatorname{I}_2(ab,\Sigma)_{\bar{x}}$.
\end{lemma}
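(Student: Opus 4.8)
The plan is to read off both claims from the single observation that right multiplication by a fixed invertible matrix is a linear diffeomorphism of $M_k(\R^n)$ which preserves $\Sigma^1$, while the $x$--dependence of $b$ contributes only a correction tangent to $\Sigma^1$. First I would fix $\bar x\in U$ with $a(\bar x)=:S_0\in\Sigma^1$ and write $B_0:=b(\bar x)\in GL_k(\R)$. Since $\rank b\equiv k$, right multiplication by $b(x)$ never changes the corank, so $(ab)\inv(\Sigma^1)=a\inv(\Sigma^1)$ and, in particular, $S_0B_0\in\Sigma^1$. For $B\in GL_k(\R)$ let $R_B\colon M_k(\R^n)\to M_k(\R^n)$, $R_B(S)=SB$; this is a linear isomorphism with $R_B(\Sigma^1)=\Sigma^1$, so its differential (equal to $R_B$) carries $T_{S_0}\Sigma^1$ isomorphically onto $T_{S_0B_0}\Sigma^1$ and induces an isomorphism $\overline{R}_{B_0}$ of normal spaces $T_{S_0}M_k(\R^n)/T_{S_0}\Sigma^1\to T_{S_0B_0}M_k(\R^n)/T_{S_0B_0}\Sigma^1$.

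Next I would differentiate $ab(x)=a(x)b(x)$ at $\bar x$, obtaining
\[
d(ab)(\bar x)[v]=da(\bar x)[v]\cdot B_0+S_0\cdot db(\bar x)[v].
\]
The key point is that the second summand is tangent to $\Sigma^1$: the orbit $\{S_0B:B\in GL_k(\R)\}$ has constant corank $1$, hence lies in $\Sigma^1$, so its tangent space $\{S_0W:W\in M_k(\R^k)\}$ at $S_0B_0$ is contained in $T_{S_0B_0}\Sigma^1$, and $S_0\cdot db(\bar x)[v]$ is of this form. Consequently, writing $\eta_a$ and $\eta_{ab}$ for the maps into the normal spaces that define $\operatorname{I}(a,\Sigma)_{\bar x}$ and $\operatorname{I}(ab,\Sigma)_{\bar x}$, the first summand gives $\eta_{ab}=\overline{R}_{B_0}\circ\eta_a$. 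As $\overline{R}_{B_0}$ is an isomorphism and $\dim T_{\bar x}U=n-k+1=\codim\Sigma^1$, the map $\eta_a$ is an isomorphism if and only if $\eta_{ab}$ is; that is, $a\pitchfork\Sigma^1$ at $\bar x$ iff $ab\pitchfork\Sigma^1$ at $\bar x$. This proves the transversality statement, and simultaneously the $\operatorname{I}_2$--claim, since a transversal intersection point contributes $1$ for both maps.

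Finally, for the oriented index with $\det B_0>0$, the identity $\eta_{ab}=\overline{R}_{B_0}\circ\eta_a$ yields
$\operatorname{I}(ab,\Sigma)_{\bar x}=\sgn(\overline{R}_{B_0})\cdot\operatorname{I}(a,\Sigma)_{\bar x}$, where $\sgn(\overline{R}_{B_0})=\pm1$ records whether $\overline{R}_{B_0}$ respects the chosen normal orientations (those induced by the fixed orientation $\theta$ of $\Sigma^1$ and the orientation of $M_k(\R^n)$). I expect this last step to be the main obstacle, since one must verify $\sgn(\overline{R}_{B_0})=+1$ rather than merely that it is a well-defined sign. I would settle it by connectedness of $GL_k^+(\R)=\{\det>0\}$: choosing a path $B_t$ from the identity to $B_0$ inside $GL_k^+(\R)$, the family $t\mapsto\overline{R}_{B_t}$ of normal-space isomorphisms is continuous, so $t\mapsto\sgn(\overline{R}_{B_t})$ is a continuous $\{\pm1\}$--valued function, hence constant; its value at $t=0$ is $+1$ because $\overline{R}_{\mathrm{id}}=\mathrm{id}$. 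Therefore $\sgn(\overline{R}_{B_0})=+1$ and $\operatorname{I}(a,\Sigma)_{\bar x}=\operatorname{I}(ab,\Sigma)_{\bar x}$, as claimed.
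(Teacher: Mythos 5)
Your proposal is correct, but it takes a genuinely different route from the paper's proof. For the transversality equivalence, the paper first brings $a(x)$ and $ab(x)$ into a block normal form by elementary row and column operations, then reads off from $(ab)_1^j=\sum_r a_r^j b_1^r$ the identity
$\bigl[\partial\bigl((ab)_1^1,\ldots ,(ab)_1^{n-k+1}\bigr)/\partial x\bigr]=b_1^1(x)\cdot\bigl[\partial\bigl(a_1^1,\ldots ,a_1^{n-k+1}\bigr)/\partial x\bigr]$
and concludes via the rank criterion of Lemma \ref{tech1}; you instead argue coordinate-free, using the product rule together with the observation that the correction term $S_0\cdot db(\bar x)[v]$ lies in the tangent space of the $GL_k(\R)$--orbit of $S_0$, which sits inside $T_{S_0B_0}\Sigma^1$, yielding the clean identity $\eta_{ab}=\overline{R}_{B_0}\circ\eta_a$. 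For the index statement, the paper does not track orientations at all: it builds the homotopy $ah(x,t)=a(x)\cdot h(x,t)$ on a small ball around $\bar x$ (using path-connectedness of the $k\times k$ matrices of positive determinant, exactly the connectedness fact you use) and then invokes homotopy invariance of $\operatorname{I}$ and $\operatorname{I}_2$; you instead deduce $\operatorname{I}(ab,\Sigma)_{\bar x}=\sgn(\overline{R}_{B_0})\cdot\operatorname{I}(a,\Sigma)_{\bar x}$ from the same normal-space identity and pin down $\sgn(\overline{R}_{B_0})=+1$ by following $t\mapsto\overline{R}_{B_t}$ along a path in $GL_k^+(\R)$. Your version buys independence from Lemma \ref{tech1} and from the normal-form reduction, and it exposes the precise linear-algebraic mechanism (the orbit direction is exactly the part of $d(ab)$ that the normal projection kills); the paper's version buys economy, reusing machinery already established (elementary operations, Lemma \ref{tech1}, homotopy invariance of the intersection number), and it sidesteps the one point you leave informal, namely the continuity of a family of isomorphisms between \emph{varying} normal spaces --- which in your argument should be made precise, e.g.\ by trivializing the normal bundle of $\Sigma^1$ along the path $t\mapsto S_0B_t$, after which the locally-constant-sign argument is rigorous.
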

\begin{proof}
Take $x\in U$ such that $a(x)\in \Sigma ^1$. Note that then also $ab(x)\in \Sigma ^1$. By elementary column and row operations we may transform mappings $a$ and $ab$ to such forms, that matrices $a(x)$ and $ab(x)$ have the following forms:
\[
a(x)=\left [
\begin{array}{c|c}
0 &  \cr
\vdots & \mathbf{0}_{(n-k+1)\times (k-1)} \cr
0 &  \cr
\hline
0 &  \cr
\vdots & D_{(k-1)\times (k-1)} \cr
0 &  \cr
\end{array}
\right ], \quad
ab(x)=\left [
\begin{array}{c|c}
0 &  \cr
\vdots & \mathbf{0}_{(n-k+1)\times (k-1)} \cr
0 &  \cr
\hline
0 &  \cr
\vdots & \bar{D}_{(k-1)\times (k-1)} \cr
0 &  \cr
\end{array}
\right ],
\] 
where $D$ and $\bar{D}$ have maximal rank. 

Note that $(ab)_1^j=\sum \limits_{r=1}^k a_r^jb_1^r$. From the forms of the matrices $a(x)$ and $ab(x)$ we get $b_1^1(x)\neq 0$, $b_1^2(x)=\ldots =b_1^{k}(x)=0$, $a_r^j(x)=0$ for $j=1,\ldots, n-k+1$, $r=1,\ldots,k$. Hence
\[
\left [ \frac{\partial ((ab)_1^1,\ldots ,(ab)_1^{n-k+1})}{\partial (x_1,\ldots ,x_{n-k+1})}(x)\right ]=b_1^1(x)\cdot \left [ \frac{\partial (a_1^1,\ldots ,a_1^{n-k+1})}{\partial (x_1,\ldots ,x_{n-k+1})}(x)\right ]. 
\]
From Lemma \ref{tech1} we obtain the conclusion. 

There is $r>0$ such that $\bar{B}(\bar{x},r)\subset U$ and $a\inv(\Sigma)\cap \bar{B}(\bar{x},r)=\{\bar{x}\}$ (where $\bar{B}(\bar{x},r)$ denotes the closed ball centered at $\bar{x}$ of the radius $r$). Since the set of $k\times k$--matrices with positive determinant is path--connected, then there exists a homotopy $h\colon \bar{B}(\bar{x},r)\times [0,1]\to M_k(\R^k)$ between the constant map equal the identity matrix and $b$. Then $ah\colon \bar{B}(\bar{x},r)\times [0,1]\to M_k(\R^n)$ given by
\[
ah(x,t)=a(x)\cdot h(x,t) 
\]
is a homotopy between $a$ and $ab$ such that $ah\inv (\Sigma)\subset B(\bar{x},r)\times [0,1]$.  
Since $\operatorname{I}$ and $\operatorname{I}_2$ are homotopy invariants, we get the conclusion.
\end{proof}

Let $M$ be a smooth $m$--dimensional manifold.
According to  \cite{golub,whitney2,whitney1}, a point $p\in M$ is a cross--cap of a
smooth mapping $f:M\longrightarrow \R^{2m-1}$, if there is a coordinate system near
$p$, such that
in some neighbourhood of $p$ the mapping $f$ has the form
\[(x_1,\ldots , x_m)\mapsto(x_1^2,x_2,\ldots , x_m,x_1x_2,\ldots ,x_1x_m).\]

Take $f:M\longrightarrow \R^{2m-1}$ with only cross--caps as singularities.
By \cite[Theorem 3]{whitney1}, if $m$ is even and $M$ is a closed manifold, then $f$ has an even
number of cross--caps.

Let $(M,\partial M)$ be an $m$--dimensional smooth compact manifold with boundary, $m$
\textbf{even}. Take a continuous mapping $f\colon [0,1]\times M\longrightarrow \R^{2m-1}$ such that
there exists a neighbourhood of $\partial M$ in which all the $f_t$'s are regular,
and $f_0$, $f_1$ have only cross--caps as singularities. According to  \cite[Theorem
4]{whitney1}, mappings $f_0$ and $f_1$ have the same number of cross--caps $\mod 2$.

Let $(M,\partial M)$ be an $m$--dimensional smooth compact manifold with boundary, $m$
\textbf{odd}. Take a smooth mapping $f:M\longrightarrow\R^{2m-1}$ and  let $p\in M$ be a cross--cap of $f$. 
According to \cite{whitney1}, there are  coordinate systems near $p$ and $f(p)$, such that 
\begin{equation} \label{warunek1}
\frac{\partial f}{\partial x_1}(p)=0
\end{equation} and vectors 
\begin{equation}\label{wektory}
\frac{\partial^2 f}{\partial x_1^2}(p),\frac{\partial f}{\partial x_2}(p),\ldots ,\frac{\partial f}{\partial x_m}(p),\frac{\partial^2 f}{\partial x_1\partial x_2}(p),\ldots , \frac{\partial^2 f}{\partial x_1\partial x_m}(p)
\end{equation} 
are linearly independent.
The cross--cap $p$ is called positive (negative) if  the vectors (\ref{wektory}) determine the negative  (positive) orientation of $\R^{2m-1}$. According to \cite[Lemma 3]{whitney1}, this definition does not depend on choosing the coordinate system on $M$. 

Take a continuous mapping $f\colon [0,1]\times M\longrightarrow \R^{2m-1}$ such that
there exists a neighbourhood of $\partial M$ in which all the $f_t$'s are regular,
and $f_0$, $f_1$ have only cross--caps as singularities. According to  \cite[Theorem
4]{whitney1}, mappings $f_0$ and $f_1$ have the same algebraic sum of cross--caps.

For any smooth map $g\colon \R^n\to\R^s$ we consider its tangent map $dg$ as a map going from $\R^n$ to $M_s(\R^{n})$.

Let $h=(h_1,\ldots ,h_l)\colon \R^{m+l}\to \R^l$ be a smooth map such that $h^{-1}(0)\neq \emptyset$ and $\rank dh(x)=l$ for $x\in h^{-1}(0)$, i.e. $h\inv(0)$ is a complete intersection and so a smooth manifold of dimension $m$.

\begin{prop} \label{cross-cap1}
Let $f=(f_1,\ldots ,f_{2m-1})\colon \R^{m+l}\to \R^{2m-1}$ be a smooth map. Then $p\in h\inv(0)$ is a cross--cap of $f|h\inv(0)$ if and only if $\rank d(h,f)(p)=m+l-1$ and $\left (d(h,f)\right )|h\inv(0) \pitchfork \Sigma^1$ at $p$. 
\end{prop}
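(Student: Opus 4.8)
The plan is to reduce the statement to an intrinsic characterization of a cross--cap of $F:=f|h\inv(0)$ in terms of the corank and the transversality of its own differential $dF\colon h\inv(0)\to M_m(\R^{2m-1})$, and then to transport both conditions to the ambient map $d(h,f)$, using the rank hypotheses built into Lemmas \ref{tech1} and \ref{tech2}.

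First I would establish a rank relation. For $x\in h\inv(0)$ we identify $T_xh\inv(0)=\Ker dh(x)$, so that $dF_x=df(x)|_{\Ker dh(x)}$ and hence $\Ker d(h,f)(x)=\Ker dh(x)\cap\Ker df(x)=\Ker dF_x$. Since $\rank dh(x)=l$ on $h\inv(0)$, this gives $\rank d(h,f)(x)=l+\rank dF_x$ for every $x\in h\inv(0)$ near $p$. In particular $\rank d(h,f)(p)=m+l-1$ holds if and only if $dF_p$ has corank $1$ (its generic rank being $m$). This settles the first condition and, being valid on a whole neighbourhood, also verifies the rank hypotheses needed in the transfer step below.

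Next I would prove the intrinsic equivalence: $p$ is a cross--cap of $F$ if and only if $\corank dF_p=1$ and $dF\pitchfork\Sigma^1$ at $p$. By Whitney's description (\ref{warunek1})--(\ref{wektory}), $p$ is a cross--cap exactly when, in suitable coordinates, $\partial F/\partial x_1(p)=0$ and the $2m-1$ vectors (\ref{wektory}) are independent; since $\partial F/\partial x_1(p)=0$ while $\partial F/\partial x_2(p),\dots,\partial F/\partial x_m(p)$ are among the independent vectors, this forces $\rank dF_p=m-1$. Choosing domain coordinates so that $\partial/\partial x_1$ spans $\Ker dF_p$, and target coordinates so that $\partial F/\partial x_2(p),\dots,\partial F/\partial x_m(p)$ become the last $m-1$ basis vectors, the matrix $dF(x)\in M_m(\R^{2m-1})$ acquires the normal form of Lemma \ref{tech1} (first column zero, the remaining columns with vanishing first $m=n-k+1$ entries). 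That lemma then identifies $dF\pitchfork\Sigma^1$ at $p$ with the full rank of $[\partial^2F_j/\partial x_1\partial x_i(p)]_{i,j=1}^{m}$, which is precisely the independence of $\partial^2F/\partial x_1^2(p)$ and the $\partial^2F/\partial x_1\partial x_i(p)$ modulo $\operatorname{span}\{\partial F/\partial x_i(p)\}$ --- the remaining content of (\ref{wektory}). Thus cross--cap $\Leftrightarrow$ corank $1$ together with transversality.

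Finally I would transfer the transversality to $d(h,f)$. Pick coordinates $(x,y)\in\R^m\times\R^l$ near $p$ with $h\inv(0)=\{y=0\}$ and $\tilde Q:=[\partial h_i/\partial y_j]$ invertible; then $\partial h_i/\partial x_j\equiv0$ on $h\inv(0)$, so the transpose $d(h,f)^{\mathsf T}$ restricted to $h\inv(0)$ has, after ordering rows as $(h,f)$ and columns as $(x,y)$, the block shape $\left[\begin{smallmatrix}0&\tilde Q\\ dF&R\end{smallmatrix}\right]$. This is exactly the matrix $e$ of Lemma \ref{tech2} with core $a=dF$, $b=0$, $c=\left[\begin{smallmatrix}\tilde Q\\ R\end{smallmatrix}\right]$ and $s=l$, and its two rank hypotheses are the uniform relation $\rank d(h,f)=l+\rank dF$ from the first step. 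Hence $dF\pitchfork\Sigma^1$ at $p$ if and only if $d(h,f)^{\mathsf T}|h\inv(0)\pitchfork\Sigma^1$ at $p$; since transposition is a diffeomorphism of the matrix space carrying $\Sigma^1$ to $\Sigma^1$, the same holds for $d(h,f)|h\inv(0)$. Combining the three steps yields the claimed equivalence. The main obstacle I expect is the bookkeeping: matching the row/column (and transpose) conventions so that Lemmas \ref{tech1} and \ref{tech2} apply verbatim, and checking that the rank conditions of Lemma \ref{tech2} hold throughout a neighbourhood of $p$ rather than only at $p$ --- which is exactly what the uniform rank relation from the first step guarantees.
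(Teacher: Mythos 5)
Your first two steps are fine. The kernel computation giving $\rank d(h,f)(x)=l+\rank dF_x$ for $x\in h\inv(0)$ is correct and settles the rank part of the statement, and your derivation of the intrinsic characterization (a point $p$ is a cross--cap of $F$ if and only if $\corank dF_p=1$ and $dF\pitchfork\Sigma^1$ at $p$) from Whitney's conditions (\ref{warunek1})--(\ref{wektory}) via Lemma \ref{tech1} is a legitimate replacement for the paper's citation of \cite[Remark, p.~14]{krzyznowel}, which is what the paper invokes for exactly this equivalence.

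The gap is in your transfer step. To have $\partial h_i/\partial x_j\equiv 0$ along $h\inv(0)$ you must pass to adapted coordinates, i.e.\ a (generally nonlinear) diffeomorphism $\varphi\colon(\R^m\times\R^l,0)\to(\R^{m+l},p)$ with $\varphi(\{y=0\})=h\inv(0)$ locally; $h\inv(0)$ is a curved complete intersection, so no linear change of variables achieves this. In those coordinates the block matrix $\left[\begin{smallmatrix}0&\tilde Q\\ dF&R\end{smallmatrix}\right]$ that you feed into Lemma \ref{tech2} is the Jacobian of the composition $(h,f)\circ\varphi$ along $\{y=0\}$, \emph{not} the restriction to $h\inv(0)$ of the Jacobian of $(h,f)$: by the chain rule
\[
d\bigl((h,f)\circ\varphi\bigr)(x,0)=d(h,f)(\varphi(x,0))\cdot d\varphi(x,0).
\]
The proposition, however, is a statement about the map $q\mapsto d(h,f)(q)$, $q\in h\inv(0)$, taken in the ambient coordinates. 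So after Lemma \ref{tech2} you have only shown that $dF\pitchfork\Sigma^1$ at $p$ if and only if $d((h,f)\circ\varphi)|\{y=0\}\pitchfork\Sigma^1$ at $0$, and you still owe the claim that right multiplication by the invertible matrix field $d\varphi(x,0)$ does not affect transversality to $\Sigma^1$. That claim is precisely Lemma \ref{tech3} of the paper, which you never invoke, and it is not automatic: a change of coordinates in the source changes the \emph{values} of a matrix--valued map, not merely its parametrization, and invariance of transversality to the fixed stratum $\Sigma^1$ under such a change requires the computation carried out in the proof of Lemma \ref{tech3} (again through Lemma \ref{tech1}). Transposition --- the only compatibility issue you do address --- is the harmless part. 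Once Lemma \ref{tech3} is inserted at this point, your argument closes up and in fact coincides in structure with the paper's proof: rank reduction, intrinsic characterization (cited there, re-proved by you), then Lemma \ref{tech2} and Lemma \ref{tech3}.
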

\begin{proof}
Note that if $p$ is a cross--cap of $f|h^{-1}(0)$, then $\rank d(f|h^{-1}(0))(p)=m-1$, and this holds if and only if $\rank d(h,f)(p)=m+l-1$. Let $p$ be such that $\rank d(h,f)(p)=m+l-1$.

Let us take a local coordinate system $\varphi\colon (\R^{m+l},0) \to (\R^{m+l},p)$ such that $(h\inv(0),p)=\varphi((\R^m\times \{0\},0))$. Note that $h\circ \varphi|(\R^m\times \{0\},0)\equiv 0$.

The point $p$ is a cross--cap of $f|h\inv(0)$ if and only if $0$ is a cross--cap of $f\circ \varphi|(\R^m\times \{0\},0)$. By \cite[Remark, p. 14]{krzyznowel} this takes place if and only if $d\left (f\circ \varphi | (\R^m\times \{0\},0)\right ) \pitchfork \Sigma ^1$ at $0$. From Lemma \ref{tech2} it holds if and only if $\left (d ((h,f)\circ\varphi)\right )| (\R^m\times \{0\},0)\pitchfork \Sigma ^1$ at $0$. 

Let us observe that $dh(\varphi(x))\cdot \left [ \frac{\partial \varphi}{\partial y}(x)\right ]$ is a square matrix with maximal rank, and
\[
d\left ((h,f)\circ\varphi\right )(0)=d(h,f)(p)\cdot d\varphi(0)=
\left [ 
\begin{matrix}
\mathbf{0} & dh(p)\cdot \frac{\partial \varphi}{\partial y}(0)\cr
df(p)\cdot \frac{\partial \varphi}{\partial x}(0) & *
\end{matrix}
\right ],
\]
where $(x,y)$ are coordinates in $\R^m\times \R^l=\R^{m+l}$.

Since $d\left ((h,f)\circ\varphi\right )(x,0)=d(h,f)(\varphi(x,0))\cdot d\varphi(x,0)$, from Lemma \ref{tech3} we obtain that $\left (d ((h,f)\circ\varphi)\right )| (\R^m\times \{0\},0)\pitchfork \Sigma ^1$ at $0$ if and only if $\left (d(h,f)\right )\circ \varphi | (\R^m\times \{0\},0)\pitchfork \Sigma ^1$ at $0$, which is equivalent to $\left (d(h,f)\right )|h\inv(0) \pitchfork \Sigma^1$ at $p$.

\end{proof}

\begin{prop} \label{cross-cap2}
Take $p\in h\inv(0)$ such that $\rank d(f|h\inv(0))(p)=m-1$ (note that this occurs if and only if $\rank d(h,f)(p)=m-1+l$). The point $p$ is a cross--cap of $f|h\inv(0)$ if and only if $p$ is a regular zero of the mapping $\mu |h\inv(0)$, where $\mu\colon \R^{m+l}\to \R^s$ is given at the point $(x,y)$ by all the  $(m+l)\times (m+l)$--minors of $d(h,f)(x,y)$.
\end{prop}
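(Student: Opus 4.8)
The plan is to reduce the statement to Proposition \ref{cross-cap1} and then to a single rank computation. First, observe that $\mu$ is the composition of $d(h,f)$ with the map $\mathrm{Min}$ sending a matrix in $M_{m+l}(\R^{l+2m-1})$ to the vector of all its maximal minors, so that $\mu\inv(0)=(d(h,f))\inv(\Sigma)$. In particular, under the standing hypothesis $\rank d(h,f)(p)=m+l-1$ we have $d(h,f)(p)\in\Sigma^1$ and $\mu(p)=0$, so $p$ is automatically a zero of $\mu|h\inv(0)$. Writing $a:=\left(d(h,f)\right)|h\inv(0)$, Proposition \ref{cross-cap1} tells us that $p$ is a cross--cap of $f|h\inv(0)$ if and only if $a\pitchfork\Sigma^1$ at $p$. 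Thus everything reduces to proving that, at such a point $p$, transversality of $a$ to $\Sigma^1$ is equivalent to $d(\mu|h\inv(0))(p)$ having maximal rank $m$, which is what ``$p$ is a regular zero of $\mu|h\inv(0)$'' means (the target dimension exceeds $m$, so maximal rank forces the differential to be injective, and then $p$ is indeed isolated in the zero set, as a cross--cap should be).

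Next I would pass to a local model. Choosing a chart $\varphi$ on $h\inv(0)$ near $p$ as in the proof of Proposition \ref{cross-cap1}, I treat $a$ as a map from an open set $U\subset\R^{n-k+1}$ (with $n=l+2m-1$, $k=m+l$, so $n-k+1=m$) into $M_k(\R^n)$. By constant elementary row and column operations I bring $a(p)$ into the normalized corank--$1$ form of Lemma \ref{tech1}, i.e. $\begin{bmatrix}0&0\\0&D\end{bmatrix}$ with $D$ an invertible $(k-1)\times(k-1)$ block. Here I use that such operations act on the vector of maximal minors by an invertible linear map (the corresponding compound matrix, via the Cauchy--Binet formula), so both $\mu(p)=0$ and the rank of $d(\mu|U)(p)$ are unaffected by the normalization; moreover, being diffeomorphisms preserving $\Sigma^1$ (as in Lemma \ref{technical_matrix}), these operations also preserve transversality to $\Sigma^1$ at $p$. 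Hence I may assume $a(p)$ is already in this form.

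The heart of the argument is the following minor computation at $p$. For a maximal minor $M_R$ (determinant of the submatrix on a $k$--element row set $R$ and all $k$ columns), differentiating $x\mapsto M_R(a(x))$ at $p$ gives a sum of terms (cofactor at $p$)$\,\cdot\,$(derivative of one entry). Since $a(p)=\begin{bmatrix}0&0\\0&D\end{bmatrix}$, the only nonsingular $(k-1)\times(k-1)$ submatrix of $a(p)$ is $D$ itself, so a cofactor is nonzero at $p$ only when deleting the differentiated entry leaves exactly $D$. This forces $R=\{j\}\cup\{n-k+2,\dots,n\}$ for some top row $j\in\{1,\dots,m\}$ and the entry to be the $(j,1)$ one, namely $a_1^j$. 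Consequently every maximal minor satisfies $d(M_R\circ a)(p)\in\mathrm{span}\{da_1^1(p),\dots,da_1^m(p)\}$, and for the $m$ distinguished minors one gets $d(M_j\circ a)(p)=\det D(p)\,da_1^j(p)$. Because $\det D(p)\neq0$, it follows that
\[
\rank d(\mu|U)(p)=\rank\left[\frac{\partial(a_1^1,\dots,a_1^{n-k+1})}{\partial(x_1,\dots,x_{n-k+1})}(p)\right].
\]
By Lemma \ref{tech1} the right--hand side equals $n-k+1=m$ exactly when $a\pitchfork\Sigma^1$ at $p$. Combining this with the reduction of the first paragraph yields the claimed equivalence.

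The main obstacle is the minor computation of the third paragraph: one must verify precisely which maximal minors have nonzero differential at the normalized point and that the distinguished ones reproduce $\det D\cdot da_1^j$, so that $\rank d(\mu|U)(p)$ is governed entirely by the block $[\partial(a_1^1,\dots,a_1^{n-k+1})/\partial x](p)$ that already appears in Lemma \ref{tech1}. A secondary technical point is justifying that the normalizing row and column operations leave $\rank d(\mu|U)(p)$ invariant, for which the linearity and invertibility of the induced action on maximal minors (Cauchy--Binet, i.e. passage to compound matrices) is exactly what is needed.
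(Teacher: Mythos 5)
Your proof is correct, and it shares its skeleton with the paper's: both begin by using Proposition \ref{cross-cap1} to reduce the statement (under the standing rank hypothesis) to the equivalence of $d(h,f)|h\inv(0)\pitchfork\Sigma^1$ at $p$ with $\rank d(\mu|h\inv(0))(p)=m$, after observing that $\mu$ is the composition of $d(h,f)$ with the maximal--minors map $\Phi$ and that this rank condition is what ``regular zero'' means here. The two proofs diverge in how that equivalence is obtained. The paper treats $\Phi$ as a local defining system for $\Sigma^1$: it asserts $\rank d\Phi(d(h,f)(p))=m$ (shown as in the proof of Lemma 2 of \cite{krzyz}) and then applies Lemma 1 of \cite{krzyz}, a general statement converting transversality to $\Phi\inv(0)=\Sigma^1$ into the rank condition on the composition $\Phi\circ d(h,f)|h\inv(0)$. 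You instead normalize $a(p)$ by constant elementary row and column operations --- justifying, correctly, that neither side of the equivalence changes (the operations act linearly and invertibly on the vector of maximal minors by Cauchy--Binet, and they are linear diffeomorphisms of $M_k(\R^n)$ preserving $\Sigma^1$, in the spirit of Lemma \ref{technical_matrix}) --- and then compute all differentials $d(M_R\circ a)(p)$ by cofactor expansion: since the only nonsingular $(k-1)\times(k-1)$ submatrix of the normalized $a(p)$ is $D$, only the minors on row sets $\{j\}\cup\{n-k+2,\ldots,n\}$ contribute, each a $\det D$--multiple of $da_1^j(p)$, so $\rank d(\mu|U)(p)$ equals the rank of the block appearing in Lemma \ref{tech1}, which characterizes transversality. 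What each route buys: the paper's proof is shorter but imports its key step from \cite{krzyz}; yours is self--contained within this paper's own toolkit (Lemma \ref{tech1}, the normalization device, linear algebra) and in effect re--proves the needed special case of the two cited lemmas, with the added benefit of making explicit which minors actually carry the rank.
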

\begin{proof}
By Proposition \ref{cross-cap1} we get that $p$ is a cross--cap if and only if $d(h,f)|h\inv(0) \pitchfork \Sigma^1 \subset M_{m+l}(\R^{2m-1+l})$ at $p$.

Take $\Phi\colon (M_{m+l}(\R^{2m-1+l}),d(h,f)(p))\to \R^s$ such that $\Phi(A)$ is given by all $(m+l)\times (m+l)$--minors of $A$. Note that $\Phi\inv(0)=(\Sigma^1,d(h,f)(p))$. Similarly as in \cite[proof of Lemma 2]{krzyz} one can show that $\rank d\Phi (d(h,f)(p))=m$. According to \cite[Lemma 1]{krzyz} we obtain, that $d(h,f)|h\inv(0) \pitchfork \Sigma^1$ at $p$ if and only if $\rank d(\Phi\circ d(h,f)|h\inv(0))(p)=m$. Note that  $\Phi\circ d(h,f)|h\inv(0)=\mu|h\inv(0)$, so we get that $p$ is a cross--cap of $f|h\inv(0)$ if and only if $p$ is a regular zero of $\mu |h\inv(0)$.
\end{proof}

Let us take a smooth map $g\colon \R^{m+l}\to \R$ such that $M=h\inv(0)\cap \{ g\geqslant 0\}$ is an $m$--dimensional compact manifold with boundary. We fix an orientation of $M$ as follows. Let $x\in M$. We say, that vectors $v_1,\ldots v_m\in \R^{m+l}$ are well--oriented in $T_xM$ if and only if $\nabla h_1(x), \ldots ,\nabla h_j(x), v_1,\ldots,v_m$ are positively oriented in $\R^{m+l}$. From Proposition \ref{cross-cap1}  we obtain the following.

\begin{theorem} \label{boundary}
Let $f\colon \R^{m+l}\to \R^{2m-1}$ be a smooth map such that $f|M$ has no singular points near $\partial M$.
If $m$ is \textbf{even} then the number of cross--caps in $M$ of every
smooth mapping $\hat{f}\colon M\longrightarrow \R^{2m-1}$ close enough to $f|M$ with only
cross--caps as singular points is congruent to  $\operatorname{I}_2(d(h,f)|M,\Sigma) \mod 2$.
\end{theorem}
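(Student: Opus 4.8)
The plan is to identify the mod-$2$ count of cross-caps of $\hat f$ with the number of transversal intersections of an associated differential map with $\Sigma$, and then to invoke the homotopy invariance of $\operatorname{I}_2$. I write $a=d(h,f)|M\colon M\to M_{m+l}(\R^{2m-1+l})$, so that here $n-k=m-1>0$ is odd (since $m$ is even) and $M$ is $(n-k+1)=m$-dimensional; this is exactly the setting in which $\operatorname{I}_2(a,\Sigma)$ was defined. The hypothesis that $f|M$ has no singular points near $\partial M$ means $\rank d(h,f)=m+l$ there, i.e. $a$ maps a neighbourhood of $\partial M$ into $M_k(\R^n)\setminus\Sigma$; hence $a(\partial M)\cap\Sigma=\emptyset$ and $\operatorname{I}_2(a,\Sigma)$ is well defined.

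Next I would fix a smooth $\hat f\colon M\to\R^{2m-1}$ that is $C^1$-close to $f|M$ and has only cross-caps as singularities, extend $\hat f-f|M$ to a $C^1$-small map on $\R^{m+l}$ supported near $h\inv(0)$ to obtain $\hat F$ with $\hat F|M=\hat f$ and $\hat F$ $C^1$-close to $f$, and put $\hat a=d(h,\hat F)|M$, which is then $C^0$-close to $a$. By Proposition \ref{cross-cap1}, a point $p\in M$ is a cross-cap of $\hat f$ if and only if $\hat a(p)\in\Sigma^1$ and $\hat a\pitchfork\Sigma^1$ at $p$. Because $\hat f$ has only cross-caps, at every point of $\hat a\inv(\Sigma)$ the corank is exactly $1$, so $\hat a(M)\cap\Sigma^i=\emptyset$ for $i\geqslant 2$ and $\hat a$ is transversal to $\Sigma^1$ everywhere along $\hat a\inv(\Sigma^1)$; that is, $\hat a\pitchfork\Sigma$ and $\hat a\inv(\Sigma)=\hat a\inv(\Sigma^1)$ is precisely the set of cross-caps of $\hat f$. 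Closeness also forces $\hat f$ to be regular near $\partial M$, so all these cross-caps lie in the interior of $M$. Hence the number of cross-caps of $\hat f$ equals $\#\hat a\inv(\Sigma)\equiv\operatorname{I}_2(\hat a,\Sigma)\pmod 2$, directly from the definition of $\operatorname{I}_2$.

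Finally I would connect $\hat a$ back to $a$. Since $M_k(\R^n)=\R^{nk}$ is convex, the straight-line homotopy $(1-t)a+t\hat a$ stays in $M_k(\R^n)$; because $a(\partial M)$ is compact and disjoint from the closed set $\Sigma$, choosing $\hat a$ uniformly close enough to $a$ on $\partial M$ keeps this segment off $\Sigma$ throughout. Thus $a$ and $\hat a$ are homotopic through maps sending $\partial M$ into $M_k(\R^n)\setminus\Sigma$, and Theorem \ref{index} yields $\operatorname{I}_2(\hat a,\Sigma)=\operatorname{I}_2(a,\Sigma)$. Combining this with the previous congruence gives that the number of cross-caps of $\hat f$ is congruent to $\operatorname{I}_2(d(h,f)|M,\Sigma)\bmod 2$.

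I expect the main obstacle to be the bookkeeping around the extension step: I must ensure that a $C^1$-small perturbation $\hat f$ of $f|M$ produces a matrix map $\hat a=d(h,\hat F)|M$ that is genuinely $C^0$-close to $a$ (so homotopy invariance applies), even though $\hat a(p)$ also records normal derivatives of $\hat F$ that are not determined by $\hat f$ alone. The point to verify is that the cross-cap characterization of Proposition \ref{cross-cap1} is intrinsic to $\hat f=\hat F|M$ and independent of the chosen extension, so that $\hat a\inv(\Sigma)$ genuinely counts the cross-caps regardless of how $\hat F$ is extended, while a sufficiently small extension still keeps $\hat a$ close to $a$.
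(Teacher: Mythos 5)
Your proof is correct and follows essentially the route the paper intends: the paper states Theorem \ref{boundary} as a direct consequence of Proposition \ref{cross-cap1}, i.e.\ identifying the cross--caps of $\hat{f}$ with the transversal intersection points of an associated differential map with $\Sigma^1$ (so that their number mod $2$ is $\operatorname{I}_2$ by definition) and then invoking homotopy invariance of $\operatorname{I}_2$ under homotopies keeping $\partial M$ off $\Sigma$. The extension bookkeeping you flag --- choosing $\hat{F}$ with $\hat{F}|M=\hat{f}$ so that $\hat{a}=d(h,\hat{F})|M$ is $C^0$--close to $a$, and noting that the characterization in Proposition \ref{cross-cap1} is intrinsic to $\hat{f}$ --- is exactly the detail the paper leaves implicit.
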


\begin{prop} \label{sum}
Let $f\colon \R^{m+l}\to \R^{2m-1}$ be a smooth map such that $f|M$ has only cross--caps as singular points and finitely many of them, moreover no singular points belongs to $\partial M$.
If $m$ is \textbf{odd} then the algebraic sum of cross--caps of $f|M$ is equal to  $\displaystyle \frac{(-1)^{l+1}}{2}\sum\limits _{(\beta,p)} \deg (\widetilde{d(h,f)}|M,(\beta,p))$, where $(\beta,p)'s$ are the zeros of $\widetilde{d(h,f)}|M$. 
\end{prop}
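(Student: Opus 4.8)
The plan is to recognize this as an instance of the even case treated in Theorem \ref{index_even} and then to match the local intersection signs with Whitney's cross--cap signs. Put $a=d(h,f)|M$, regarded as a map into $M_{k}(\R^{n})$ with $n=2m-1+l$ and $k=m+l$; then $n-k=m-1$ is even (as $m$ is odd) and $\dim M=m=n-k+1$. Hence $\Sigma^{1}$ is orientable, $M$ is oriented by the convention fixed just before Theorem \ref{boundary} (through the $\nabla h_i$), and $M_{k}(\R^{n})$ carries its standard orientation, so every hypothesis of Theorem \ref{index_even} holds for $a$.

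First I would translate the cross--cap condition into transversality. By Proposition \ref{cross-cap1}, a point $p\in M$ is a cross--cap of $f|M$ precisely when $\rank d(h,f)(p)=m+l-1$ and $a\pitchfork\Sigma^{1}$ at $p$; moreover $a\inv(\Sigma)$ consists exactly of the singular points of $f|M$. Since by hypothesis these are all cross--caps, finitely many, and none lies on $\partial M$, it follows that $a\inv(\Sigma)=a\inv(\Sigma^{1})$ is finite and $a\pitchfork\Sigma$. Theorem \ref{index_even} then gives
\[
\operatorname{I}(a,\Sigma)=\frac{1}{2}\sum_{(\beta,p)\in\widetilde{a}\inv(0)}\deg(\widetilde{a},(\beta,p)),
\]
so, comparing with the asserted formula, it remains to prove that the algebraic sum of cross--caps equals $(-1)^{l+1}\operatorname{I}(a,\Sigma)$. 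Since both sides are sums of local contributions over $a\inv(\Sigma)$, it suffices to check cross--cap by cross--cap that Whitney's sign $\sigma_{p}$ satisfies $\sigma_{p}=(-1)^{l+1}\operatorname{I}(a,\Sigma)_{p}$.

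The heart of the matter is this local comparison. I would fix a cross--cap $p$, take the coordinate system $\varphi$ of Proposition \ref{cross-cap1} carrying $h\inv(0)$ to $\R^{m}\times\{0\}$ and $f|h\inv(0)$ into Whitney's normal form, and then, exactly as in Proposition \ref{cross-cap1}, use Lemmas \ref{tech2} and \ref{tech3} together with elementary row and column operations to bring $a$ near $p$ into the standard form of Theorem \ref{index_even}, with first column vanishing in its top $n-k+1=m$ rows and $\det D>0$ below. The local formula of that theorem then reads
\[
\operatorname{I}(a,\Sigma)_{p}=(-1)^{k-1}\sgn\det\left[\frac{\partial(a_1^1,\ldots,a_1^{m})}{\partial x}(p)\right],
\]
and in Whitney's normal form this Jacobian is assembled from the second--order data of $f$ displayed in (\ref{wektory}); its determinant sign is exactly the orientation those vectors impose on $\R^{2m-1}$. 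As $m$ is odd, $(-1)^{k-1}=(-1)^{m+l-1}=(-1)^{l}$, and Whitney declares $p$ positive when the vectors of (\ref{wektory}) give the \emph{negative} orientation; reconciling this with the $\nabla h_i$--orientation of $M$ produces the desired identity $\sigma_{p}=(-1)^{l+1}\operatorname{I}(a,\Sigma)_{p}$.

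The hard part is precisely the sign bookkeeping in this last step: one must follow how the row permutation that puts $a(p)$ in standard form alters the orientations of $M_{k}(\R^{n})$ and of $\Sigma^{1}$ (as quantified by Lemma \ref{technical_matrix}), how the $l$ rows contributed by $dh$ enter through Lemma \ref{tech2}, and how Whitney's reversed sign convention combines with the factor $(-1)^{k-1}$, so that everything collapses to the single sign $(-1)^{l+1}$. The case $l=0$ is a reassuring check: there (\ref{wektory}) reduces to $2e_1,e_2,\ldots,e_{2m-1}$, orienting $\R^{2m-1}$ positively, whence $\sigma_{p}=-1$, while the displayed formula gives $\operatorname{I}(a,\Sigma)_{p}=(-1)^{m-1}=1$, in agreement with $\sigma_{p}=(-1)^{0+1}\operatorname{I}(a,\Sigma)_{p}$.
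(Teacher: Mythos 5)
Your global-to-local reduction is sound and is genuinely different from the paper's route: setting $a=d(h,f)|M$, Proposition \ref{cross-cap1} does give $a\pitchfork\Sigma$ with $a\inv(\Sigma)=a\inv(\Sigma^1)$ equal to the finite set of (interior) cross--caps, Theorem \ref{index_even} applies since $n-k=m-1$ is even, and the Proposition is indeed equivalent to the pointwise identity relating Whitney's sign of a cross--cap $p$ to $(-1)^{l+1}\operatorname{I}(a,\Sigma)_p$. The $l=0$ sanity check is also essentially correct. However, the proof stops exactly where the Proposition begins: the local identity at an \emph{arbitrary} cross--cap, with the uniform constant $(-1)^{l+1}$, is asserted rather than proved --- you yourself call it ``the hard part'' and claim that ``everything collapses to the single sign $(-1)^{l+1}$'' without computing anything. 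Moreover, the tools you cite cannot carry that computation. Lemma \ref{tech2} is a pure transversality statement with no orientation content, so it says nothing about how the $l$ rows coming from $dh$ affect the sign. Lemma \ref{technical_matrix} only propagates an already established equality $\operatorname{I}(a,\Sigma)_x=\deg(\widetilde{a},(\beta,x))$ under constant elementary operations; it does not by itself evaluate $\operatorname{I}(a,\Sigma)_p$, and Whitney's normal form additionally involves a coordinate change on the target $\R^{2m-1}$, i.e.\ a point--dependent left multiplication of $d(h,f)$, for which the paper provides no sign--controlled lemma at all (Lemma \ref{tech3} handles only right multiplication, i.e.\ source--side changes). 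Finally, the reconciliation of Whitney's convention with the $\nabla h_i$--orientation of $M$ --- precisely where the paper's factor $\sgn \det \left[ dh(p) \frac{\partial \varphi}{\partial y}(0)\right]=(-1)^l$ enters --- is only gestured at. Since the constant $(-1)^{l+1}$ is the entire content of the statement, this is a genuine gap, not a detail.

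For comparison, the paper never passes to intersection numbers in this proof. It fixes an orientation--preserving chart $\varphi$ of $\R^{m+l}$ carrying $\R^m\times\{0\}$ onto $M$ near $p$, quotes \cite[Theorem 1]{krzyz} to express Whitney's sign as $-\frac{1}{2}\sum \deg (\widetilde{d(f\circ\varphi|\R^m)},(\pm\gamma,0))$, transfers these local degrees to those of $\widetilde{c}$, where $c=d(h,f)\circ\varphi|\R^m$, via Lemma \ref{tech3} (right multiplication by $d\varphi$) together with an explicit block computation that produces the factor $\sgn \det \left[ dh(p) \frac{\partial \varphi}{\partial y}(0)\right]$, and then observes that $\varphi|\R^m$ preserves the chosen orientation of $M$ exactly when this sign equals $(-1)^l$; the two cases combine to yield $(-1)^{l+1}$. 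Your $l=0$ check essentially re-derives the normal--form instance of that cited theorem; what your proposal leaves undone is the analogue of this bookkeeping for general $l$ and a general cross--cap, either along the paper's degree--theoretic lines or by proving the sign--refined block and target--change lemmas your intersection--number route would require.
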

\begin{proof}
In this proof we will use \cite[Theorem 1]{krzyznowel} and Lemma \ref{tech3}. We will define some mappings to the space of matrices, and we will present connections between local invariants associated with them.

Let us take $p\in M$ --- a cross-cap of $f|M$.

Take an orientation--preserving diffeomorphism $\varphi\colon (\R^{m+l},0)\to (\R^{m+l},p)$ such that $\varphi (\R^m\times \{0 \},0)=(M,p)$. Put $\varphi|\R^m(x)=\varphi(x,0)$, $x\in (\R^m,0)$. 
Note that $h\circ \varphi |\R^m\equiv 0$.

One can choose $\varphi$ (composing, if necessary, with an orientation--preserving linear diffeomorphism in the domain) such that the first $m$ columns of the matrix $d\varphi(0)$ are orthogonal. Then it is easy to check that the local coordinate system $\varphi|\R^m$ of $M$ is orientation--preserving if and only if $\sgn \det \left [ dh(p) \frac{\partial \varphi}{\partial y}(0)\right ]=(-1)^l$.

According to \cite[Theorem 1]{krzyz} the sign of $p$ is equal to 
\[
-\frac{1}{2}(\deg (\widetilde{d(f\circ \varphi|\R^m)},(\gamma,0))+\deg (\widetilde{d(f\circ \varphi|\R^m)},(-\gamma,0))),
\]
where $(\gamma, 0)$, $(-\gamma, 0)$ are the only zeros of $\widetilde{d(f\circ \varphi|\R^m)}$. 

Let us define $a,c\colon (\R^m,0)\to M_{m+l}(\R^{2m-1+l})$ as $c(x)=d(h,f)(\varphi(x,0))$ and $a(x)=c(x)\cdot d\varphi(x,0)$. Then both $\widetilde{a}$ and $\widetilde{c}$ have only two zeros, $\widetilde{a}\inv (0)=\{((\gamma,0),0),((-\gamma,0),0)\}$, and from Lemma \ref{tech3} all the local topological degrees of them at these zeros are equal.

Using the definition of the local topological degree, after some computations one can show that 
\[
-\frac{1}{2}\sgn \det \left [ dh(p) \frac{\partial \varphi}{\partial y}(0)\right ]\left(\deg (\widetilde{d(f\circ \varphi|\R^m)},(\gamma,0))+\deg (\widetilde{d(f\circ \varphi|\R^m)},(-\gamma,0))\right)=
\]
\[
=-\frac{1}{2}\left(\deg (\widetilde{a},((\gamma,0),0))+\deg (\widetilde{a},((-\gamma,0),0))\right).
\]
We get that the sign of the cross--cap $p$ equals
\[
-\frac{1}{2}\sgn \det \left [ dh(p) \frac{\partial \varphi}{\partial y}(0)\right ] \sum\limits _{(\delta,0)\in \widetilde{c}\inv (0)} \deg (\widetilde{c},(\delta,0)). 
\]
Note that $\widetilde{c}$ is $\widetilde{d(h,f)}|M$ composed with the local coordinate system $\varphi|\R^m$, so local topological degrees of $\widetilde{c}$ and $\widetilde{d(h,f)}|M$ at its zeros $(\delta,0)$ and $(\beta,0)$ are equal if and only if $\varphi|\R^m$ is orientation--preserving. Then we obtain the conclusion of this Proposition.
\end{proof}

From Propositions \ref{cross-cap1} and \ref{sum} and Theorem \ref{index_even} we obtain the following.

\begin{theorem} \label{boundary2}
Let $m$ be \textbf{odd}. There is such an orientation of $\Sigma^1\subset M_{m+l}(\R^{2m-1+l})$ that the following is true.

Let $f\colon \R^{m+l}\to \R^{2m-1}$ be a smooth map such that $f|M$ has no singular points near $\partial M$.
The algebraic sum of cross--caps in $M$ of every
smooth mapping $\hat{f}\colon M\longrightarrow \R^{2m-1}$ close enough to $f|M$ with only
cross--caps as singular points is equal to  $\operatorname{I}(d(h,f)|M,\Sigma)$.
\end{theorem}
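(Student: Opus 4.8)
The plan is to reduce Theorem~\ref{boundary2} to the two results that were already established, namely Proposition~\ref{sum} (which computes the algebraic sum of cross--caps as a half--sum of local degrees of $\widetilde{d(h,f)}|M$) and Theorem~\ref{index_even} (which identifies a half--sum of such local degrees with the intersection number $\operatorname{I}(a,\Sigma)$ for $n-k$ even). The first order of business is to check that the dimensions match so that Theorem~\ref{index_even} applies: here the target space is $M_{m+l}(\R^{2m-1+l})$, so in the notation of Section~\ref{inter_matrix} we have $k=m+l$ and $n=2m-1+l$, whence $n-k=m-1$. Since $m$ is odd, $n-k=m-1$ is even, which is exactly the parity regime of Theorem~\ref{index_even}. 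Moreover $n-k+1=m$, and $M$ is the $m$--dimensional compact manifold with boundary $h\inv(0)\cap\{g\geqslant 0\}$, so the map $d(h,f)|M\colon M\to M_{m+l}(\R^{2m-1+l})$ is precisely of the type to which $\operatorname{I}(\cdot,\Sigma)$ and Theorem~\ref{index_even} apply, provided $d(h,f)(\partial M)\cap\Sigma=\emptyset$; this last condition follows from the hypothesis that $f|M$ has no singular points near $\partial M$ together with Proposition~\ref{cross-cap1}, since a point of $\partial M$ mapping into $\Sigma$ would be a drop in rank of $d(f|h\inv(0))$, i.e. a singular point.

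Next I would pass from $f|M$ to a nearby generic map $\hat f$ with only cross--caps. By Theorem~\ref{index}/its oriented analogue, $\operatorname{I}(d(h,f)|M,\Sigma)$ depends only on the homotopy class of the boundary restriction, and because $\hat f$ is close to $f|M$ and agrees in rank behaviour near $\partial M$, the maps $d(h,f)|M$ and $d(h,\hat f)|M$ are homotopic through maps sending $\partial M$ into $M_{m+l}(\R^{2m-1+l})\setminus\Sigma$; hence $\operatorname{I}(d(h,f)|M,\Sigma)=\operatorname{I}(d(h,\hat f)|M,\Sigma)$. So it suffices to prove the identity for $\hat f$, whose singular set in $M$ is finite and consists only of cross--caps. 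For such $\hat f$, Proposition~\ref{cross-cap1} identifies the cross--caps of $\hat f|M$ with the points where $d(h,\hat f)|M\pitchfork\Sigma^1$, i.e. with $\left(d(h,\hat f)|M\right)\inv(\Sigma)$, and via the association of Section~\ref{prel} these correspond to the zeros $(\beta,p)$ of $\widetilde{d(h,\hat f)}|M$.

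The core of the argument is then to match the local contributions point by point. Proposition~\ref{sum} gives the sign of each cross--cap $p$ as $\frac{(-1)^{l+1}}{2}\bigl(\deg(\widetilde{d(h,\hat f)}|M,(\beta,p))+\deg(\widetilde{d(h,\hat f)}|M,(-\beta,p))\bigr)$, and summing yields the algebraic sum of cross--caps as $\frac{(-1)^{l+1}}{2}\sum_{(\beta,p)}\deg(\widetilde{d(h,\hat f)}|M,(\beta,p))$. On the other hand Theorem~\ref{index_even}, applied to $a=d(h,\hat f)|M$, gives $\operatorname{I}(a,\Sigma)=\frac12\sum_{(\beta,x)\in\widetilde a\inv(0)}\deg(\widetilde a,(\beta,x))$ once an orientation of $\Sigma^1$ has been fixed. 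The point is that the orientation of $\Sigma^1$ in Theorem~\ref{index_even} was chosen by a specific convention, and I am free to twist it by the global sign $(-1)^{l+1}$; doing so produces exactly the orientation of $\Sigma^1\subset M_{m+l}(\R^{2m-1+l})$ asserted in the statement, and with that choice the two half--sums agree term by term. The main obstacle is bookkeeping the orientation sign: one must verify that the factor $(-1)^{l+1}$ coming from $\sgn\det[\,dh(p)\,\frac{\partial\varphi}{\partial y}(0)\,]$ in Proposition~\ref{sum} is a single global constant (independent of the cross--cap $p$), so that absorbing it into a fixed reorientation of the connected manifold $\Sigma^1$ is legitimate rather than requiring a pointwise sign correction. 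Once this constancy is confirmed, combining the two displayed formulas gives $\operatorname{I}(d(h,\hat f)|M,\Sigma)$ equal to the algebraic sum of cross--caps of $\hat f$, which by the homotopy invariance above equals the asserted quantity for $f|M$.
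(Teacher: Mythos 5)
Your proposal is correct and takes essentially the same route as the paper: the paper's proof is exactly the combination you describe, namely Proposition \ref{cross-cap1} (identifying cross--caps with transversal intersections of $d(h,f)|M$ with $\Sigma^1$ and giving the boundary condition), Proposition \ref{sum} (the sign formula with the global constant $(-1)^{l+1}$), and Theorem \ref{index_even} (valid here since $k=m+l$, $n=2m-1+l$, so $n-k=m-1$ is even), with the factor $(-1)^{l+1}$ absorbed into the choice of orientation of the connected manifold $\Sigma^1$ and homotopy invariance of $\operatorname{I}$ used to pass between $f|M$ and the nearby generic map. The only point worth tightening is that for a map $\hat{f}$ defined only on $M$ one should fix a smooth extension to a neighbourhood of $M$ in $\R^{m+l}$ (or invoke Whitney's homotopy invariance to reduce to a perturbation $f'$ of $f$ itself) so that $d(h,\hat{f})$ is meaningful, but this is routine and does not affect the argument.
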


Using \textsc{Singular} (\cite{singular}), Theorems \ref{formula}, \ref{boundary}, \ref{boundary2}, and Proposition \ref{cross-cap2} we present the following examples.

\begin{ex}
Let $M$ be a half of the surface of genus $2$ (double torus), where $g(x,y,z)=z$, $h(x,y,z)=(x(x-1)^2(x-2)+y^2)^2+z^2-0,01\colon \R^3\to \R$. Take $f(x,y,z)=(6yz+2x, 6yz+4y, 8z^2+5z)\colon \R^3\to \R^3$.
Then $f|M$ has only cross--caps as singular points and its number equals $0$ mod 2.
\end{ex}

\begin{ex}
Let $M$ be as in the previous example. Take $f(x,y,z)=(x^2+8x, 10xy+10x, 9z^2+9z)\colon \R^3\to \R^3$. The map $f|M$ has not only cross--caps as singularities, although every map close enough to $f|M$ with only cross--caps as singularities has the number of cross--caps congruent to $1$ modulo $2$.
\end{ex}

Since the number of cross--caps modulo $2$ of the mappings from the two previous examples are not the same, according to \cite{whitney2} these mappings are not homotopic by a homotopy regular near the boundary.

\begin{ex}
Let $h(x,y,z,w)=x^2+y^2+z^2-w^2-1$, and $g_r(x,y,z,w)=r^2-x^2-y^2-z^2-w^2$. Let us define $M_r=h\inv (0)\cap\{g_r\geqslant 0\}$. Then for $r>1$, $M_r$ is a $3$--dimensional manifold with boundary, and the boundary has $2$ connected components. Put $f\colon \R^4\to \R^5$ as $f(x,y,z,w)=(8z^2-4x+4y,4xy+4z,xz-7y+3w,z^2-3x+8y,zw^2+y^2)$. Then mappings $f|M_2$, $f|M_3$, $f|M_{10}$ have only cross--caps as singular points, and no singular points on the boundary. Moreover the algebraic sum of cross--caps of $f|M_2$ equals $1$, of $f|M_3$ equals $0$, and of $f|M_{10}$ equals $-3$.
\end{ex}

So there is no homotopy regular near the boundary between any two of these mappings.

{\bf Acknowledgement.}
The authors wish to express their gratitude to Zbigniew Szafraniec for comments which
have improved the paper.


\begin{thebibliography}{99}

\bibitem{ando} {Y.~Ando}, Elimination of certain Thom-Boardman singularities of order two, J. Math. Soc. Japan \textbf{34} no.2 (1982), 241--267.

\bibitem{becker}  E.~Becker, T.~W\"{o}rmann, On the trace formula for quadratic forms and some applications,
 Contemporary Mathematics \textbf{155} (1994), 271--291.

\bibitem{singular} {W.~Decker, G.-M.~Greuel, G.~Pfister, H.~Sch{\"o}nemann}, 
\newblock {\sc Singular} {4-1-1} --- {A} computer algebra system for polynomial computations.
\newblock {http://www.singular.uni-kl.de} (2018).

\bibitem{golub} M.~Golubitsky, V.~Guillemin, Stable mappings and their singularities, 1973 by Springer--Verlag New York.

\bibitem{gupol} {V.~Guillemin, A.~Pollack,} Differential topology, Prentice-Hall, Inc., Englewood Cliffs, N.J., 1974. 

\bibitem{hatcher} A.~Hatcher, Algebraic Topology, Cambridge University Press, 2002.

\bibitem{hirsch} M.~W.~Hirsch, Differential topology, Graduate Texts in Mathematics, no. 33. Springer--Verlag, New York--Heidelberg, 1976. 

\bibitem{ikegamisaeki}  K.~Ikegami, O.~Saeki, Cobordism of Morse maps and its applications to map germs, Math. Proc. Cambridge Philos. Soc. \textbf{147} (2009), no. 1, 235--254.

\bibitem{krzyz} I.~Krzy\.{z}anowska, Cross-cap singularities counted with sign, Algebra and Discrete Mathematics, Vol. \textbf{25} (2018), no. 2, 257--268.
 

\bibitem{krzyznowel}
{I.~Krzy\.{z}anowska, A.~Nowel}, Mappings into the Stiefel manifold and cross--cap singularities, Houston J. Math. Vol. \textbf{44} (2018), no. 3, 831--846.


\bibitem{krzyzszafran} {I.~Krzy\.{z}anowska, Z.~Szafraniec}, Polynomial mappings into a Stiefel manifold and immersions, Houston J. Math. \textbf{40} (2014), no. 3, 987--1006.


\bibitem{lecsza1} A. \L\c{e}cki, Z. Szafraniec, Applications of the Eisenbud-Levine theorem to real algebraic geometry. Computational algebraic geometry (Nice, 1992), Progr. Math., \textbf{109}, Birkh\"{a}user Boston, Boston, MA (1993), 177--184.

\bibitem{lecsza2} A. \L\c{e}cki, Z. Szafraniec, An algebraic method for calculating the topological degree, {\it Topology in Nonlinear Analysis}, Banach Center Publications {\bf 35}, Polish Academy of Sciences, Warszawa (1996), 73--83.

\bibitem{nirenberg} L.~Nirenberg, Topics in nonlinear functional analysis, Lecture
Notes in CIMS at New York Univ., New York 1974.

\bibitem{pedersenetal} P.~Pedersen, M.-F.~Roy, A.~Szpirglas, 
Counting real zeros in the multivariate case, Computational Algebraic Geometry,
 Progr. in Math. \textbf{109}, Birkh\"{a}user (1993),  203--224.
 
\bibitem{Saji} K.~Saji, Criteria for Morin singularities into higher dimensions, MS Kokyuroku Bessatsu \textbf{B55} (2016), 205-224.

\bibitem{szafraniec1} Z.~Szafraniec, Topological degree and quadratic forms, Journal of Pure and Applied Algebra \textbf{141} (1999), 299--314.

\bibitem{whitney2} {H.~Whitney}, {\it The general type of singularity of a set of
$2n-1$ smooth functions of $n$ variables,} Duke Math. J. {\bf 10} (1943), 161--172.

\bibitem{whitney1} {H.~Whitney}, {\it The Singularities of a Smooth $n$--Manifolds
in $(2n-1)$--Space,} Ann. of Math. (2) {\bf 45} (1944), 247--293.


\end{thebibliography}
\end{document}